\newcommand{\la}{\lambda}
\newcommand{\R}{\mathbb{R}}
\newtheorem{definition}{Definition}[section]
\newtheorem{example}{Example}[section]
\newtheorem{theorem}{Theorem}
\newtheorem{lemma}{Lemma}[section]
\newtheorem{proposition}{Proposition}[section]
\newtheorem*{remark}{Remark}
\newtheorem*{notation}{Notation}
\theoremstyle{plain}
\author{ Zhexing Zhang}
\begin{document}

\title{Spectral Projection Estimates Restricted to Uniformly Embedded Submanifolds}

\maketitle
\begin{abstract}
    Let $M$ be a manifold with nonpositive sectional curvature and bounded geometry, and let $\Sigma$ be a uniformly embedded submanifold of $M.$ We estimate the $L^2(M)\to L^q(\Sigma)$ norm of a $\log$-scale spectral projection operator. It is a generalization of result of Chen \cite{xuehua} to noncompact cases.
    
    We also prove sharp spectral projection estimates of spectral windows of any small size restricted to nontrapped geodesics on even asymptotically hyperbolic surfaces with bounded geometry and curvature pinched below 0. 
\end{abstract}
\section{Introduction}
 Let $(M,g)$ be a smooth $n$-dimensional boundaryless complete Riemannian manifold with nonpositive curvature and bounded geometry, and $\Sigma$ be a $k$-dimensional smooth
 uniformly embedded submanifold on $M$. Denote $\Delta_g$ the Laplace operator associated with the metric $g$, and denote $P=\sqrt{\Delta_g}$. Let $\mathbf{1}_{I}(P)$ be the spectral projection operator on the spectral window $I\subset \mathbb{R}$. Let $R_\Sigma$ be the restriction operator from $M$ to $\Sigma.$ Define \begin{align}\label{mu}
     \mu(q)=\begin{cases}
         \frac{n-1}{2}-\frac{k}{q}, &\text{ if }k\leq n-2, q\geq 2\text{ or }k=n-1, q\geq \frac{2n}{n-1},\\
         \frac{n-1}{4}-\frac{k-1}{q}, &\text{ if }k=n-1, q<\frac{2n}{n-1}.
         \end{cases}
 \end{align}
 The first main result of this paper is
\begin{theorem}\label{theorem1}
 Given any $f \in L^2(M)$,
 when $k =n-1$ and $q>\frac{2n}{n-1},$ or when $k \leq n-2$ and $q>2$,
 \begin{align}\label{eqtheorem1}
     ||R_\Sigma\mathbf{1}_{[\lambda,\lambda+\log(\lambda)^{-1}]}(P)f||_{L^q(\Sigma)}\lesssim \frac{\lambda^{\mu(q)}}{(\log\lambda)^{1/2}}||f||_{L^2(M)}.
 \end{align}
\end{theorem}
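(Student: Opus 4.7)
The plan is to follow the strategy of Chen \cite{xuehua} for compact nonpositively curved manifolds, with the bounded geometry assumption replacing compactness. The first step is a $TT^*$ reduction with a smooth cutoff. Choose a Schwartz function $\rho$ with $\widehat\rho$ compactly supported and $|\rho|\geq 1$ on $[-1,1]$, and set $T = c\log\lambda$ for a small constant $c$ depending on the curvature bounds. It suffices to bound $\|R_\Sigma\,\rho(T(\lambda-P))f\|_{L^q(\Sigma)} \lesssim \lambda^{\mu(q)} T^{-1/2}\|f\|_{L^2(M)}$; by $TT^*$, this is equivalent to
$$\bigl\|R_\Sigma\,\rho^2(T(\lambda-P))\,R_\Sigma^*\bigr\|_{L^{q'}(\Sigma)\to L^q(\Sigma)} \lesssim \lambda^{2\mu(q)}/T.$$
Writing $\rho^2(T(\lambda-P)) = \frac{1}{2\pi T}\int \widehat{\rho^2}(t/T)\,e^{it\lambda}\,e^{-itP}\,dt$ and splitting the integrand via $\beta(t) + (1-\beta(t))$ with $\beta\in C_c^\infty(\R)$ equal to $1$ on $[-1,1]$ and supported in $[-2,2]$, one separates the short-time contribution $|t|\leq 2$ from the long-time contribution $1\leq |t|\leq T$.

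The short-time piece is, up to the factor $T^{-1}$, a bounded function of $P$ frequency-localized near $\lambda$, and because only local information about the wave propagator enters on bounded time intervals, the classical Burq--Gérard--Tzvetkov submanifold restriction estimate applies uniformly under bounded geometry and gives the required bound $\lambda^{2\mu(q)}/T$. For the long-time piece, I would lift to the universal cover $\pi:\tilde M\to M$, which Cartan--Hadamard identifies diffeomorphically with $\R^n$, and expand $e^{-itP}(x,y) = \sum_{\gamma\in\Gamma} e^{-it\tilde P}(\tilde x,\gamma\tilde y)$ over the deck transformation group $\Gamma$. For $|t|\leq c\log\lambda$ with $c$ sufficiently small, the Hadamard parametrix on $\tilde M$ is valid and represents each $e^{-it\tilde P}(\tilde x,\gamma\tilde y)$ as an oscillatory integral with phase proportional to $d_{\tilde M}(\tilde x,\gamma\tilde y) - t$ and amplitude uniformly controlled by curvature and bounded-geometry data. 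Treating each $\gamma$-term as a Carleson--Sjölin-type oscillatory integral operator and restricting to lifted pieces of $\Sigma$, the oscillatory integral operator bounds underlying the Burq--Gérard--Tzvetkov argument supply, term by term, an $L^{q'}(\Sigma)\to L^q(\Sigma)$ estimate whose constant decays in $d_{\tilde M}(\tilde x,\gamma\tilde y)$.

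The main obstacle is uniform control of the $\gamma$-sum as $x,y\in\Sigma$ range to infinity in $M$. Bounded geometry provides a uniform lower bound on the injectivity radius of $\tilde M$ and uniform two-sided curvature bounds, yielding a uniform counting estimate $\#\{\gamma : d_{\tilde M}(\tilde x,\gamma\tilde y)\leq T\}\lesssim e^{hT}$ that replaces the cocompactness argument in the compact case. Combined with the dispersive decay per $\gamma$-term and with $c$ chosen small enough that the exponential lattice-point growth $e^{hT}$ is dominated by the stationary-phase gains, the long-time piece satisfies the required $\lambda^{2\mu(q)}/T$ bound. The delicate point is precisely this uniformity along the noncompact submanifold $\Sigma$: once the uniform embedding of $\Sigma$ and the bounded geometry of $M$ have been used to extract base-point-independent estimates, the remainder of the argument follows Chen's compact template closely.
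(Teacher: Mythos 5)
Your $TT^*$ reduction, the time splitting at scale $O(1)$ versus $|t|\lesssim T=c\log\lambda$, and the treatment of the short-time piece by running the Burq--G\'erard--Tzvetkov argument uniformly over a bounded-geometry atlas are all in line with the paper (which carries out that uniformity carefully, via uniformly locally finite coverings $\{A_j\},\{B_j\}$ of $\Sigma$ and of a neighborhood of $\Sigma$, finite propagation speed, and a unit-band estimate, Proposition~\ref{unitband}). The genuine gap is in your long-time piece. You propose to expand over deck transformations and prove, term by term, an $L^{q'}(\Sigma)\to L^q(\Sigma)$ bound with a constant decaying in $d_{\tilde M}(\tilde x,\gamma\tilde y)$, then sum against the counting bound $e^{hT}$. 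Two problems: first, the per-term restricted bound is exactly the hard, unproved step. The mixed Hessian of $(x,y)\mapsto d_{\tilde M}(\tilde x,\gamma\tilde y)$ restricted to $\Sigma\times\Sigma$ can degenerate (e.g.\ when $\gamma$ translates along a geodesic piece of $\Sigma$, the phase is essentially linear along $\Sigma$); this is precisely the difficulty that forces the elaborate tube decompositions in the critical-exponent works of Chen--Sogge, Blair and Xi--Zhang, and it is not resolved by bounded geometry or by the uniform embedding of $\Sigma$. Second, even granting some per-term bound, your bookkeeping does not close as stated: in pinched negative curvature the Hadamard-parametrix amplitude decays only like $e^{-(n-1)d/2}$ while the number of deck transformations with $d_{\tilde M}(\tilde x,\gamma\tilde y)\le T$ grows like $e^{(n-1)T}$ (or $e^{hT}$), so the $\gamma$-sum still grows exponentially in $T$ no matter how small $c$ is; there is no ``stationary-phase gain'' that beats the counting term by term. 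The exponential must instead be absorbed by a strict gap in powers of $\lambda$, which is available only because $q$ is strictly subcritical, and your write-up never isolates this gap.

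This is exactly where the paper (following Chen) takes a different and simpler route for the global part: it never needs per-term restricted operator norms. It bounds $R_\Sigma G_\lambda R_\Sigma^*$ on $L^1(\Sigma)\to L^\infty(\Sigma)$ by the pointwise kernel bound $|G_\lambda(x,y)|\lesssim \lambda^{\frac{n-1}{2}}e^{c_MT}$ (Lemma 3.6 of \cite{hstz}, where the deck-transformation sum is performed only at the level of a sup bound), and on $L^2(\Sigma)\to L^2(\Sigma)$ by $\lambda^{2\mu(2)}$ (up to a log when $k=n-2$) using the unit-band estimate and almost orthogonality. Interpolating gives $\lambda^{\frac{n-1}{2}+\frac{n-1-2k}{q}}e^{c_MT(1-\frac2q)}$ (resp.\ $\lambda^{\frac{n-1}{2}-\frac{n-2}{q}}e^{c_MT(1-\frac2q)}$ for $k=n-1$), and the hypotheses $q>2$ (resp.\ $q>\frac{2n}{n-1}$) make the $\lambda$-exponent strictly less than $2\mu(q)$, so choosing $T=c^*\log\lambda$ with $c^*$ small turns the exponential into a harmless $\lambda^{b}$ and yields $O(\lambda^{2\mu(q)}T^{-1})$. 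If you want to salvage your outline, replace the per-term analysis by this $L^1\to L^\infty$ / $L^2\to L^2$ interpolation; note that the $L^2\to L^2$ input is another reason the unit-band estimate over the uniform coverings must be proved first.
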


Reznikov \cite{Reznikov2004NormsOG} investigated the spectral projection estimates restricted to curves on compact hyperbolic surfaces. Then, Burq, G\'erard and Tzvetkov \cite{BGT} proved the following spectral projection estimate. If $M$ is an $n$-dimensional compact manifold, and $\Sigma$ is a $k$-dimensional submanifold of $M$, then \begin{align}\label{log}
     ||R_\Sigma\mathbf{1}_{[\lambda,\lambda+1]}(P)f||_{L^q(\Sigma)}\lesssim \lambda^{\mu(q)}(\log\lambda)^{1/2}||f||_{L^2(M)}
 \end{align} if $k=n-2$ and $q=2$, or $k=n-1$ and $q=\frac{2n}{n-1}$. Meanwhile, we have \begin{align}\label{unitbandcompact}
     ||R_\Sigma\mathbf{1}_{[\lambda,\lambda+1]}(P)f||_{L^q(\Sigma)}\lesssim \lambda^{\mu(q)}||f||_{L^2(M)},
 \end{align} if $q\geq 2$ otherwise.
 Thereafter, Hu \cite{Hu} proved that we may remove the $(\log\lambda)^{1/2}$ in \eqref{log} when $q=\frac{2n}{n-1}$ and $k=n-1$.

Chen \cite{xuehua} refined the unit band estimate in \cite{BGT} to a $\log$-scale estimate on compact manifolds for $q>2$ when $k\leq n-2$ and $q>\frac{2n}{n-1}$ when $k=n-1$, i.e.
\begin{align}
    ||R_\Sigma \mathbf{1}_{[\la,\la+\log\la^{-1}]}(P)||_{L^2(M)\to L^q(\Sigma)}\lesssim\la^{\mu(q)}(\log\la)^{-1/2}.
\end{align}
Our work generalizes Chen's result to manifolds with bounded geometry and nonpositive sectional curvature.\\

We are also interested in curves in Riemannian surfaces with nonpositive curvature. In \cite{XZ} and \cite{Xi2017KakeyaNikodymPA}, Xi and Zhang proved the $\log$-scale spectral projection restricted to compact geodesics of compact hyperbolic surfaces. When $M$ is a Riemannian surface with nonpositive curvature and $\gamma$ is a compact geodesic, Chen and Sogge \cite{CS} proved that \begin{align}
    ||R_\gamma\mathbf{1}_{[\la,\la+\log\la^{-1}]}(P)||_{L^2(M)\to L^4(\gamma)}=o(\la^{1/4}).
\end{align} Thereafter, Blair \cite{Mdblair} showed \begin{align}
    ||R_\gamma \mathbf{1}_{[\la,\la+\log\la^{-1}]}(P)||_{L^2(M)\to L^4(\gamma)}\lesssim\la^{1/4}(\log\la)^{-1/4}.
\end{align}

We state the following results for history and perspectives. Let $M$ be a compact congruence arithmetic hyperbolic surface, let $\gamma$ be a compact geodesic and let $\Psi_\la$ be an $L^2(M)$ normalized Hecke-Maass form associated to the eigenvalue $\la.$ Marshall \cite{SimM} proved \begin{align}
    ||R_\gamma \Psi_\la||_{L^2(\gamma)}\lesssim\la^{3/14+\epsilon},
\end{align}for any $\epsilon>0$.  Let $M$ be a 3-dimensional compact congruence arithmetic hyperbolic space and let $\Sigma$ be a totally geodesic surface of $M$ and let $\Psi_\la$ be an $L^2(M)$ normalized Hecke-Maass form associated to the eigenvalue $\la.$ Hou \cite{Jiaqi} proved that \begin{align}
    ||R_\Sigma \Psi_\la||_{L^2(\Sigma)}\lesssim\la^{1/4-1/1220+\epsilon}.
\end{align}

In addition to manifolds with constant negative curvature,  flat manifolds have also been investigated. Let $\mathbb{T}^n$ be a flat torus of dimension $n,$ and let $\Sigma$ be a smooth hypersurface of $\mathbb{T}^n$. Let $\Psi_\la$ be an $L^2(M)$ normalized Laplacian eigenfunction associated with the eigenvalue $\la.$ When $n=2,3,$ Bourgain and Rudnick \cite{BOURGAIN20091249} proved \begin{align}
    ||R_\Sigma \Psi_\la||_{L^2(\Sigma)}\sim 1.
\end{align}

Our work mainly considers nontrapped geodesics of 2-dimensional even asymptotically hyperbolic manifolds. A 2-dimensional manifold, $(M,g)$, is a even asymptotically hyperbolic manifold, if there exists a compactification $\overline M$, which is a smooth manifold
 with boundary $\partial M$, and the metric near the boundary takes the form
 \begin{align}
     g=\frac{d x_1^2+g_1(x_1^2)}{x_1^2},
 \end{align}
 where
 $x_1|_{\partial M }= 0$, $dx_1|_{\partial M}\neq 0$ and $g_1(x^2_1)$ is a smooth family of metrics on $\partial M$.
Huang, Sogge, Tao and the author \cite{hstz} proved the lossless spectral projection on any even asymptotically hyperbolic surface with curvature pinched below 0 with small spectral windows.

Assume $(M,g)$ is an even asymptotically hyperbolic surface with curvature pinched below 0. Following the idea of \cite{hstz}, we may construct a simply connected asymptotically hyperbolic background manifold, $(\tilde M,\tilde g)$, which agrees with $(M,g)$ at infinity. We may use the kernel estimates of the spectral measure obtained by Chen and Hassell \cite{chenhassell} on $\tilde M$ to obtain spectral projection estimates with arbitrarily small spectral windows restricted to geodesics of $\tilde M$. We may also obtain a $\log$-scaled spectral projection estimate on $M$ restricted to its compact geodesic segments derived from \cite{Mdblair}. Meanwhile, we say that a geodesic $\gamma$ in $M$ is nontrapped, if \begin{align*}
    \lim_{t\to\infty}\gamma(t)\to\infty\text{ and }
    \lim_{t\to\infty}\gamma(-t)\to\infty.
\end{align*} We can combine these ingredients to prove the following lossless spectral projection estimate restricted to any nontrapped geodesic in $M$.
\begin{theorem}\label{nb}
    Let $(M,g)$ be an even asymptotically hyperbolic surface with curvature pinched below 0. Let $q>2,$ $\lambda\geq1$ and $\eta\in(0,1]$. Let $\gamma$ be a nontrapped geodesic in $M$, then\begin{align}
        ||R_\gamma\mathbf{1}_{[\lambda,\lambda+\eta]}(P)f||_{L^q(\gamma)}\lesssim {\lambda^{\mu(q)}}{\eta^{1/2}}||f||_{L^2(M)}.
    \end{align}
\end{theorem}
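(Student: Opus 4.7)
The proof would combine the two ingredients highlighted in the introduction: a log-scale spectral projection estimate on compact geodesic segments of $M$ derived from Blair \cite{Mdblair}, and a lossless arbitrary-window estimate on the background $\tilde M$ derived from the Chen--Hassell \cite{chenhassell} spectral measure kernel bounds.

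I would first decompose the geodesic using the nontrapping hypothesis. Choose a compact set $K\subset M$ large enough that $M\setminus K$ is isometric to an end of $\tilde M$ and such that $\gamma\cap K$ is a finite union of compact arcs. Write $\gamma=\gamma_K\cup\gamma_E$ with $\gamma_K:=\gamma\cap K$ and $\gamma_E:=\gamma\setminus K$; via the end isometry, $\gamma_E$ is identified with a pair of half-geodesics in $\tilde M$. Fix a smooth partition of unity $\psi_K+\psi_E=1$ along $\gamma$ subordinate to this decomposition, so that $\|R_\gamma\mathbf{1}_{[\la,\la+\eta]}(P_M)f\|_{L^q(\gamma)} \leq \|\psi_K R_\gamma\mathbf{1}_{[\la,\la+\eta]}(P_M)f\|_{L^q(\gamma_K)}+\|\psi_E R_\gamma\mathbf{1}_{[\la,\la+\eta]}(P_M)f\|_{L^q(\gamma_E)}$, and handle each piece separately.

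On the end part $\gamma_E$, the problem is transferred to $\tilde M$ by a finite-propagation / wave-kernel comparison. On $\tilde M$, the Chen--Hassell kernel estimates for $dE_{P_{\tilde M}}(\la)$, combined with the $TT^*$ method applied to $R_{\tilde\gamma}\mathbf{1}_{[\la,\la+\eta]}(P_{\tilde M})R_{\tilde\gamma}^*$, give $\|R_{\tilde\gamma}\mathbf{1}_{[\la,\la+\eta]}(P_{\tilde M})\|_{L^2(\tilde M)\to L^q(\tilde\gamma)}\lesssim \la^{\mu(q)}\eta^{1/2}$ for any $\eta\in(0,1]$. Writing $\mathbf{1}_{[\la,\la+\eta]}(P_M)$ via a smooth cutoff $\chi$ at scale $\eta$ and the Fourier representation $\int\hat\chi(t)\,e^{it(\la-P_M)}\,dt$, the agreement $P_M\equiv P_{\tilde M}$ on $M\setminus K$ together with finite propagation speed lets one replace $P_M$ by $P_{\tilde M}$ for wave paths remaining in the end, while contributions whose bicharacteristics cross the core are absorbed using the resolvent/scattering comparison framework developed in \cite{hstz}. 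On the compact core piece $\gamma_K$, for $\eta\geq(\log\la)^{-1}$ I would invoke the log-scale bound $\|R_{\gamma_K}\mathbf{1}_{[\la,\la+(\log\la)^{-1}]}(P_M)\|_{L^2\to L^q}\lesssim \la^{\mu(q)}(\log\la)^{-1/2}$, cover $[\la,\la+\eta]$ by $N\sim\eta\log\la$ sub-windows of length $(\log\la)^{-1}$, and apply $L^2$-orthogonality of the sub-band projections (Minkowski plus Cauchy--Schwarz) to conclude $\la^{\mu(q)}(\log\la)^{-1/2}\sqrt{N}\sim\la^{\mu(q)}\eta^{1/2}$. For $\eta<(\log\la)^{-1}$ this argument is too weak, and one must reduce again to the $\tilde M$ estimate via a resolvent identity exploiting that $P_M-P_{\tilde M}$ is supported in the core.

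The principal technical obstacle is this very-small-$\eta$ transfer. The Fourier/parametrix construction of $\mathbf{1}_{[\la,\la+\eta]}(P_M)$ at scale $\eta$ naturally involves wave propagation up to times $T\sim 1/\eta$, which exceeds the $\log\la$ dispersion horizon available on $M$ directly. Overcoming this requires careful use of the scattering framework from \cite{hstz}, leveraging that $M$ and $\tilde M$ differ only in a compact region, that $\tilde M$ enjoys dispersive wave propagation for all times thanks to its simply-connected asymptotically hyperbolic structure and the curvature pinched below $0$, and that the nontrapping of $\gamma$ guarantees the geodesic flow starting on $\gamma$ escapes to infinity and only transversally interacts with the core.
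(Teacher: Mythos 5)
Your overall architecture matches the paper's: split $M$ into a compact core containing the trapped set and an end agreeing with the background $(\tilde M,\tilde g)$, use the Blair-derived $\log$-scale bound on the compact piece of $\gamma$ (the paper's Proposition \ref{proposition3.4}), use the Chen--Hassell kernel bounds to get the lossless window estimate on $\tilde M$ (Proposition \ref{asymptot}), and glue with the Duhamel/resolvent framework of \cite{hstz}. The sub-window orthogonality argument you give for $\eta\gtrsim(\log\la)^{-1}$ is also fine. The gap is in the regime you yourself flag as the main difficulty, $\eta\ll(\log\la)^{-1}$ on the core piece. Your proposed fix --- ``reduce again to the $\tilde M$ estimate via a resolvent identity exploiting that $P_M-P_{\tilde M}$ is supported in the core'' --- is circular: the core is precisely where $M$ and $\tilde M$ differ, so the error term in any such comparison identity is supported exactly where the estimate is needed, and no amount of finite propagation speed transfers it to $\tilde M$. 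The mechanism that actually closes this case (in the paper, following \cite{hstz}) is different in kind: one partitions \emph{time} into intervals of length $\log\la/\la$ via cutoffs $\alpha_j$, applies Duhamel on each slice, converts each slice into the resolvent $(\Delta+\la^2+i\la/\log\la)^{-1}$ whose restricted bound \eqref{3.15} follows from the $\log$-scale estimate, and then sums over the $\sim(\eta\log\la)^{-1}$ relevant slices by almost orthogonality; this is what upgrades $\la^{\mu(q)}(\log\la)^{-1/2}$ to $\la^{\mu(q)}\eta^{1/2}$. The leftover commutator terms $[\Delta,\psi_{tr}]$ are supported away from the trapped set, and controlling them requires a genuinely new propagation estimate (Proposition \ref{psi1}: microlocal splitting into $B^\pm_r$ pieces whose bicharacteristics leave the core, plus kernel decay), not the nontrapping of the single geodesic $\gamma$, which enters only to make $\gamma\cap\operatorname{supp}\psi_{tr}$ compact.

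A second, smaller gap: on the end you write that contributions crossing the core are ``absorbed using the resolvent/scattering comparison framework developed in \cite{hstz}.'' That framework gives resolvent bounds with target $L^q(\tilde M)$; here one needs the restricted bound $\|R_{\tilde\gamma}(\tilde\Delta+\la^2+i\eta\la)^{-1}\psi\|_{L^2(\tilde M)\to L^q(\tilde\gamma)}\lesssim\la^{\mu(q)-1}$ (the paper's Proposition \ref{keya}), which does not follow by citation and must be proved separately (dyadic time decomposition of the resolvent, the directional microlocal cutoffs $A_{k,0},A_{k,1}$, and non-stationary phase for the outgoing piece). So the proposal identifies the right ingredients and the right obstruction, but the step it offers to overcome that obstruction would fail as stated, and the two restricted resolvent propositions that carry the technical weight are missing.
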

Some examples of even asymptotically hyperbolic surfaces with curvature pinched below 0 are convex cocompact hyperbolic surfaces. As stated in \cite{bor}, they are hyperbolic surfaces with finitely many funnels and no cusps. Anker, Germain and L\'eger \cite{Anker} proved the lossless spectral projection with arbitrarily small spectral window on the hyperbolic surfaces satisfying the pressure condition, which are hyperbolic surfaces with limit sets of Hausdorff dimensions less than $\frac{1}{2}$.

This inspires us to use the explicit kernel of spectral measure to prove the sharp spectral projection estimate with arbitrarily small spectral window restricted to nontrapped geodesics of a hyperbolic cylinder, as well as compact curves of hyperbolic surfaces satisfying the pressure condition.\\

Section 2 discusses the uniformly embedded submanifold and proves Theorem \ref{theorem1}. Section 3 proves Theorem \ref{nb}. Section 5 gives some examples to illustrate the sharpness of the above two theorems.

\begin{notation}
    For any nonnegative quantity $A$ and $B,$ $A\lesssim B$ and $A=O(B)$ both mean $A\leq cB$ for some constant $c>0$ only depending on the submanifold and its ambient manifold. We use $A \sim B$ to denote $A\lesssim B$ and $B\lesssim A.$
\end{notation}
\section*{Acknowledgement}
The author would like to thank Daniel Pezzi, Connor Quinn and Christopher Sogge for their helpful comments and advice.

\section{Theorem \ref{theorem1}}
\subsection{Uniformly embedded submanifold}
In this subsection, we recall some properties of manifolds with bounded geometry and their uniformly embedded submanifolds. These results can be found in Chapter 2 of \cite{lol}.

\begin{definition}[Manifold with bounded geometry]
    A manifold $(M,g)$ is a manifold with bounded geometry, if:
    
    1. The injectivity radius of $M$ is positive.

    2. The sectional curvature of $M$ and its derivative of any order are uniformly bounded.
\end{definition}
There is a $\delta(M) > 0$ so that the coordinate charts given by the exponential map at $x$ in $M$, $\exp^M_x$, are defined on all geodesic balls $B_M(x,\delta(M))$ in $M$ centered at $x$ with radius $\delta(M)$. In addition, in the resulting
 normal coordinates, if we let $d_g$ denote the Riemannian distance in $(M,g)$, then we have $c|(\exp^M_x)^{-1} (x_1)-
 (\exp^M_x)^{-1} (x_2)|<d_g(x_1,x_2)<C|(\exp^M_x)^{-1} (x_1)-
 (\exp^M_x)^{-1} (x_2)|$, and the constants $c$ and $C$ are independent of $x_1,x_2\in M$. Finally, all derivatives of the transition maps
 from these coordinates are also uniformly bounded.
\begin{definition}[Uniformly embedded submanifold]\label{UES}
     Let $\iota : \Sigma \to M$ be an embedding of $\Sigma$ into the Riemannian manifold $(M,g)$ of bounded geometry. For $x\in \Sigma$, $\delta>0,$ we use $\Sigma(x,\delta)$ to denote the image under $\iota$ of the connected component of $x$ contained in $B_M(\iota(x),\delta)\cap\iota(\Sigma)$.
 We say that $\Sigma$ is a uniformly embedded submanifold, if there exists a $\varpi(\Sigma)>0$, such that
 for all $x \in \Sigma$, 
 
 1. the connected component $\Sigma(x,\varpi(\Sigma))$ is represented in normal coordinates
 on $B_M(\iota(x),\varpi(\Sigma))$ by the graph of a function $h_x : T_x\Sigma \to N_x$ and the family of
 functions $h$ has uniform continuity and boundedness estimates
 independent of $x$.
 
 2.  $\Sigma(x,\varpi(\Sigma))$ is the unique component of $\Sigma \cap B_M(\iota(x),\varpi(\Sigma))$.
\end{definition}
\begin{remark}
    If $\Sigma$ satisfies condition 1 in Definition \ref{UES}, $\Sigma$ is said to be uniformly immersed.
\end{remark}

We recall some useful facts of uniformly embedded submanifolds of manifolds with bounded geometry from \cite{lol}.
\begin{lemma}[Local equivalence of distance]\label{equivalence} Let $\Sigma$
be a uniformly
 immersed submanifold of the bounded geometry manifold $(M,g)$. Let $d_\Sigma$ denote the distance function of $\Sigma$ with the induced metric from $(M,g).$ Then $d_g$ and $d_\Sigma$
 are locally equivalent. In other words, for all $c>1$, there exists a $\nu_c>0$, such that for all $ d_\Sigma(x_1,x_2)<\nu_c$, we have the local converse $d_\Sigma(x_1, x_2) \leq cd_g(x_1,x_2)$.
\end{lemma}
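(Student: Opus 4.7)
The inequality $d_g(x_1,x_2)\leq d_\Sigma(x_1,x_2)$ is immediate, since the induced length of any curve in $\Sigma$ equals the ambient length of its image in $M$. The content of the lemma is the reverse estimate. Given $c>1$, the plan is to choose $\nu_c$ so small that whenever $d_\Sigma(x_1,x_2)<\nu_c$, both points lie in a single graph chart on which the induced and ambient metrics are nearly Euclidean; a direct length comparison then yields $d_\Sigma \leq c\, d_g$.

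First I would fix $x_1\in\Sigma$ and work in normal coordinates of $M$ on $B_M(\iota(x_1),\varpi(\Sigma))$. By the uniformly immersed hypothesis, $\Sigma(x_1,\varpi(\Sigma))$ is represented as the graph of $h=h_{x_1}\colon T_{x_1}\Sigma\to N_{x_1}$ with $h(0)=0$, $dh(0)=0$, and uniform $C^2$ bounds on $h$ independent of $x_1$. Bounded geometry furnishes the uniform estimate $|g_{ij}(y)-\delta_{ij}|=O(|y|^2)$ in these coordinates. Next I would choose $\nu_c<\varpi(\Sigma)$ small enough that $d_\Sigma(x_1,x_2)<\nu_c$ forces $\iota(x_2)\in\Sigma(x_1,\varpi(\Sigma))$: any curve in $\Sigma$ from $x_1$ to $x_2$ of length less than $\varpi(\Sigma)$ has image lying in $B_M(\iota(x_1),\varpi(\Sigma))$, and by connectedness lands in the component $\Sigma(x_1,\varpi(\Sigma))$. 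Consequently $\iota(x_2)=(y_2,h(y_2))$ for some $y_2\in T_{x_1}\Sigma$ with $|y_2|\leq \nu_c$.

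The core step is a two-sided comparison against $|y_2|$. On one hand, the $M$-geodesic from $\iota(x_1)$ to $\iota(x_2)$ is the straight line in normal coordinates, so
\begin{equation*}
d_g(x_1,x_2)=\sqrt{|y_2|^2+|h(y_2)|^2}\geq |y_2|.
\end{equation*}
On the other hand, the curve $\gamma(t)=(ty_2,h(ty_2))$, $t\in[0,1]$, lies in $\Sigma(x_1,\varpi(\Sigma))$ and connects the two points; using $dh(0)=0$ together with the uniform $C^2$ bound on $h$ and the uniform expansion of $g_{ij}$, the ambient norm of $\dot\gamma(t)$ is $|y_2|\bigl(1+O(\nu_c)\bigr)$, hence
\begin{equation*}
d_\Sigma(x_1,x_2)\leq \bigl(1+O(\nu_c)\bigr)|y_2|\leq \bigl(1+O(\nu_c)\bigr)\,d_g(x_1,x_2).
\end{equation*}
Choosing $\nu_c$ small enough that $1+O(\nu_c)<c$ completes the argument.

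The main point to be careful about is that all error constants above are uniform in the base point $x_1$. This is exactly what the definitions provide: bounded geometry controls the coefficients of $g$ in normal coordinates independent of $x_1$, while the uniformly immersed condition controls the family $\{h_{x_1}\}$ independent of $x_1$. Consequently the resulting $\nu_c$ depends only on $c$ and on the ambient data $(M,\Sigma)$, as required.
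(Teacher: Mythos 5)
Your argument is correct. Note, however, that the paper does not prove this lemma at all: it is quoted from Chapter 2 of \cite{lol} (the paper's role for it is purely as an imported black box), so there is no in-paper proof to compare against; what you have written is a self-contained justification of the cited result, via the standard graph-chart comparison. The structure is sound: $d_g\leq d_\Sigma$ is trivial; a curve in $\Sigma$ of length $<\nu_c<\varpi(\Sigma)$ stays in $B_M(\iota(x_1),\varpi(\Sigma))$ and hence in the component $\Sigma(x_1,\varpi(\Sigma))$, so $x_2$ lies on the graph of $h_{x_1}$; the exact identity $d_g(x_1,x_2)=\bigl(|y_2|^2+|h(y_2)|^2\bigr)^{1/2}$ holds because normal coordinates give the distance to the center exactly, provided you also take $\nu_c$ below the uniform lower bound on the injectivity radius (worth saying explicitly, since otherwise the chart need not compute distances); and the chord $t\mapsto(ty_2,h(ty_2))$ gives the matching upper bound. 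Two minor points of hygiene: the definition of uniform immersion only guarantees ``uniform continuity and boundedness estimates'' for the family $h_x$, not uniform $C^2$ bounds, but your argument survives verbatim with $h(0)=0$, $dh_{x_1}(0)=0$ and a uniform modulus of continuity for $dh_{x_1}$, which is what the definition supplies; and the metric comparison only needs $|g_{ij}(y)-\delta_{ij}|\to 0$ uniformly as $|y|\to 0$, so the $O(|y|^2)$ rate, while true in normal coordinates under bounded geometry, is not essential. With those readings, your $\nu_c$ depends only on $c$, $\varpi(\Sigma)$, the injectivity radius bound, and the uniform data, exactly as the lemma requires.
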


 \begin{lemma}[Uniformly locally finite cover of $M$]\label{UFLC} Let $(M,g)$ be a Riemannian manifold of bounded geometry.
 Then for $\delta(M) > 0$ small enough and any $0 <\delta \leq \delta(M)$, $M$ has a countable covering
 $\{B_M(x_m,\delta)\}_{m\geq1}$ such that
 
 1. For all $m\neq j$, $d_g(x_m,x_j) \geq \delta.$
 
 2. There exists an explicit global bound $K\in\mathbb{N},$ such that for each $x \in \Sigma$,  $$\#\{m:B_M(x,\delta(M))\cap B_M(x_m,\delta(M))\neq \emptyset\}\leq K.$$
 \end{lemma}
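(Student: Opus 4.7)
The plan is to construct the covering as a maximal $\delta$-separated set and then control the local multiplicity by a volume-comparison argument that uses the bounded geometry hypothesis. First, I would invoke Zorn's lemma (or, equivalently, a greedy construction on a countable dense subset) to produce a maximal set $\{x_m\}_{m\geq 1} \subset M$ satisfying $d_g(x_m, x_j) \geq \delta$ for all $m \neq j$. This gives condition 1 immediately. For the covering property, observe that if some $y \in M$ satisfied $d_g(y, x_m) \geq \delta$ for every $m$, then $\{x_m\} \cup \{y\}$ would remain $\delta$-separated, contradicting maximality; hence every $y \in M$ lies in some $B_M(x_m, \delta)$. Countability follows once we establish the uniform local finiteness in step 2.

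Next, for condition 2, suppose $B_M(x, \delta(M)) \cap B_M(x_m, \delta(M)) \neq \emptyset$; then by the triangle inequality $x_m \in B_M(x, 2\delta(M))$. Since the centers are $\delta$-separated with $\delta \leq \delta(M)$, the balls $\{B_M(x_m, \delta/2)\}$ are pairwise disjoint, and each such ball with $x_m \in B_M(x, 2\delta(M))$ sits inside $B_M(x, 3\delta(M))$. Thus
\begin{align*}
    \#\{m : B_M(x,\delta(M)) \cap B_M(x_m,\delta(M)) \neq \emptyset\} \cdot \inf_m \mathrm{vol}_g(B_M(x_m, \delta/2)) \leq \mathrm{vol}_g(B_M(x, 3\delta(M))).
\end{align*}

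The remaining ingredient is a uniform two-sided comparison $v_-(r) \leq \mathrm{vol}_g(B_M(y,r)) \leq v_+(r)$ valid for every $y \in M$ and every $0 < r \leq 3\delta(M)$, with constants depending only on the curvature bounds and $\delta(M)$. The upper bound follows from Bishop--Gromov (using the uniform sectional curvature lower bound), while the lower bound follows from Rauch comparison inside a normal coordinate chart, which is legitimate because the injectivity radius is uniformly bounded below. Choosing $K := \lfloor v_+(3\delta(M))/v_-(\delta/2) \rfloor + 1$ then yields the desired explicit global bound.

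The main obstacle — if one can call it that — is confirming that the volume comparison constants are genuinely uniform in the center point. This is where both clauses of the bounded geometry hypothesis enter in an essential way: the positive injectivity radius guarantees that $\exp^M_y$ is a diffeomorphism on a ball of uniform size, so the lower bound is not destroyed by short geodesic loops, while the uniform curvature bounds make the Jacobian of $\exp^M_y$ uniformly comparable to its Euclidean counterpart. Once these ingredients are in place the proof is essentially a packing count.
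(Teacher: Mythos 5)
Your proposal is correct and is essentially the same argument that underlies this lemma, which the paper does not prove but quotes from its reference on bounded geometry: a maximal $\delta$-separated set covers $M$ by maximality, and the multiplicity bound is the standard packing count using the uniform Günther-type lower volume bound (injectivity radius plus upper curvature bound) and the Bishop--Gromov upper volume bound (lower curvature bound). Note only that your $K$ depends on $\delta$ as well as $\delta(M)$, which is consistent with the lemma since ``global'' there means uniform in the point $x$ (and the paper only ever applies it with $\delta$ a fixed fraction of the ball radius).
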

 \begin{lemma}[Submanifold of bounded geometry]\label{sbg} Let $\Sigma$
 be a uniformly
 embedded submanifold of the bounded geometry manifold $(M,g)$. Then, $\Sigma$ with the induced metric
 is a Riemannian manifold with bounded geometry.
 \end{lemma}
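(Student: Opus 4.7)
The plan is to verify the two defining conditions of bounded geometry for $\Sigma$ with its induced metric: uniform bounds on the sectional curvature together with all its covariant derivatives, and a uniform lower bound on the injectivity radius. I would work in the normal coordinate charts on $B_M(\iota(x),\varpi(\Sigma))$ furnished by Definition \ref{UES}, in which $\Sigma$ is realized as the graph of $h_x : T_x\Sigma \to N_x$ with uniform derivative bounds.

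For the curvature, the components of the induced metric on $\Sigma$ in these coordinates are rational expressions in the ambient metric coefficients $g_{ij}$ and the first derivatives of $h_x$. The former are uniformly controlled by the bounded geometry of $M$ and the latter by the uniform-graph hypothesis, so the Christoffel symbols of $\Sigma$ and then the Riemann tensor of $\Sigma$ and its covariant derivatives of any order are uniformly bounded. Equivalently, one may route this through the Gauss and Codazzi--Mainardi equations: the second fundamental form $II_x$ is built from the Hessian of $h_x$ and the Christoffel symbols of $g$, so $II_x$ and its covariant derivatives are uniformly bounded, and the equations express $R^\Sigma$ and its derivatives in terms of $R^M$, $II$ and their derivatives.

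The more delicate task is the injectivity radius bound. I would use the standard identity $\mathrm{inj}_\Sigma(x) \geq \min\bigl(\mathrm{conj}_\Sigma(x),\tfrac12\ell_\Sigma(x)\bigr)$, where $\ell_\Sigma(x)$ denotes the length of the shortest nontrivial geodesic loop of $\Sigma$ at $x$. A uniform lower bound on $\mathrm{conj}_\Sigma$ follows from Rauch's comparison theorem once the sectional curvature of $\Sigma$ is uniformly bounded. To bound $\ell_\Sigma$ from below, suppose $\gamma:[0,L]\to\Sigma$ is a unit-speed geodesic loop with $\gamma(0)=\gamma(L)=\iota(x)$. For $L$ small enough that $L/2<\nu_c$ in Lemma \ref{equivalence}, the loop lies inside $B_M(\iota(x),\varpi(\Sigma))$, hence inside the unique graphical component. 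Reading $\gamma$ in the normal coordinates of $M$ at $\iota(x)$, the Euclidean acceleration equals $II_{\gamma(t)}(\dot\gamma,\dot\gamma)$ minus a Christoffel correction $\Gamma_M(\gamma(t))(\dot\gamma,\dot\gamma)$ that vanishes at the origin; both are uniformly bounded by step one. Taylor's theorem then gives $\gamma(L) = L\,\dot\gamma(0) + O(L^2)$, and combined with $\gamma(L)=0$ and $|\dot\gamma(0)|=1$ this forces a uniform lower bound $L\gtrsim 1$.

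The main obstacle is this loop-length estimate, which genuinely uses \emph{embeddedness} rather than mere immersion: the unique-component clause of Definition \ref{UES} is exactly what prevents a small loop from being created by a second nearby sheet of $\iota(\Sigma)$ within the coordinate patch, where the Taylor-expansion argument would otherwise fail to detect the loop. Combining the conjugate-radius bound and the loop-length bound yields the required uniform lower bound on $\mathrm{inj}_\Sigma$ and completes the verification that $(\Sigma,\iota^*g)$ has bounded geometry.
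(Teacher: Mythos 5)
The paper itself gives no proof of Lemma \ref{sbg}: it is recalled verbatim from Chapter 2 of \cite{lol}, so there is no in-paper argument to compare against; your sketch is essentially the standard proof from that reference (uniform graph charts give uniform $C^k$ control of the induced metric, hence of $R^\Sigma$ and its covariant derivatives via Gauss--Codazzi, and the injectivity radius is bounded below by a Klingenberg-type inequality plus a Taylor-expansion exclusion of short geodesic loops), and I find it correct in substance. Two small points. First, to place a short geodesic loop inside the graphical component you only need the trivial inequality $d_g\le d_\Sigma$ (a path in $\Sigma$ is a path in $M$); Lemma \ref{equivalence} gives the \emph{opposite} comparison and is not the relevant tool there. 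Second, your closing claim that the unique-component (embeddedness) clause is what the loop estimate ``genuinely uses'' is not accurate: a geodesic loop of $\Sigma$ at $x$ of length $L<2\varpi(\Sigma)$ is an intrinsic, connected subset of $\Sigma$ through $x$ mapped into $B_M(\iota(x),\varpi(\Sigma))$, so it automatically lies in the component $\Sigma(x,\varpi(\Sigma))$, and uniform immersion already suffices for that step; what the stronger graph hypotheses buy is rather the uniformity of the charts over a full ball in $T_x\Sigma$ and the completeness of $(\Sigma,\iota^*g)$, which you use implicitly when invoking $\mathrm{inj}_\Sigma(x)\ge\min\bigl(\mathrm{conj}_\Sigma(x),\tfrac12\ell_\Sigma(x)\bigr)$ and should be noted.
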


 We use the above facts to specify a covering on $\Sigma.$ We aim to cover $\Sigma$ by a locally finite covering $\{A_j\}$ and cover a 1-neighborhood of $\Sigma$ in $M$ by a locally finite covering $\{B_j\}$, such that an 1-neighborhood of $A_j$ is a subset $B_j$ for each $j.$ This would allow us to localize our problem.
 
 \begin{proposition} Let $\Sigma$, $M$ be defined as above. We can fix a small $\vartheta(\Sigma) > 0$, such that
there exists a covering, $\{A_j\}$, of $\Sigma$, and a covering, $\{B_j\}$, of $\{x:d_g(x,\Sigma)\leq \vartheta(\Sigma)/4\}$ in $M$. In addition, $\{B_j\}$ is uniformly locally finite in $M$ and $\{x\in \Sigma: d_g(x,A_j)<\vartheta(\Sigma)/4\}\subset B_j$ for every $j$.
 \end{proposition}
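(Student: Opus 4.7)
The plan is to construct both coverings simultaneously from a single family of centers on $\Sigma$. First, I will invoke Lemma \ref{sbg} so that $\Sigma$ with its induced metric is itself a manifold of bounded geometry, which lets me apply Lemma \ref{UFLC} on $\Sigma$. I will fix $\vartheta(\Sigma)$ small enough to be simultaneously less than $\delta(M)$, less than $\varpi(\Sigma)$, and less than the local equivalence threshold $\nu_2$ from Lemma \ref{equivalence}; then set $\delta_1 := \vartheta(\Sigma)/4$. Applying Lemma \ref{UFLC} on $\Sigma$ produces a countable family $\{x_j\} \subset \Sigma$, pairwise $\delta_1$-separated in $d_\Sigma$, such that $\{B_\Sigma(x_j,\delta_1)\}$ covers $\Sigma$ and is uniformly locally finite in $\Sigma$. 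I will then take $A_j := B_\Sigma(x_j,\delta_1)$ and $B_j := B_M(\iota(x_j), 2\delta_1)$.

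Verifying that $\{B_j\}$ covers the tube $\{x : d_g(x,\Sigma) \leq \vartheta(\Sigma)/4\}$ and that $\{x \in \Sigma : d_g(x,A_j) < \vartheta(\Sigma)/4\} \subset B_j$ will be routine: both reduce to a triangle inequality combined with the trivial bound $d_g \leq d_\Sigma$. Given such an $x$, I pick a witnessing $y$ (a nearest point in $\Sigma$ for the first assertion, or a point of $A_j$ for the second) and some $j$ with $y \in A_j$, and estimate $d_g(x, \iota(x_j)) \leq d_g(x,y) + d_g(y, \iota(x_j)) \leq \delta_1 + d_\Sigma(y, x_j) < 2\delta_1$.

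The main obstacle will be proving that $\{B_j\}$ is uniformly locally finite in $M$, not merely in $\Sigma$. Since the $B_j$'s all have fixed radius $2\delta_1$, this reduces to showing that the centers $\iota(x_j)$ are uniformly $d_g$-separated in $M$. The $d_\Sigma$-separation is insufficient by itself because a priori $\Sigma$ could self-accumulate in $M$. To rule this out I will use condition 2 of Definition \ref{UES}: $\Sigma \cap B_M(\iota(x), \varpi(\Sigma))$ contains only one connected component. Consequently, for any $i \neq j$, either $d_g(\iota(x_i), \iota(x_j)) \geq \varpi(\Sigma)/2$ outright, or $x_i$ and $x_j$ lie in the single normal coordinate sheet near $x_i$, in which case condition 1 together with Lemma \ref{equivalence} yields $d_\Sigma(x_i, x_j) \leq 2\, d_g(\iota(x_i), \iota(x_j))$, forcing $d_g(\iota(x_i),\iota(x_j)) \geq \delta_1/2$. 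Taking the minimum gives a uniform positive $d_g$-separation $\rho > 0$.

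With this separation in hand I will close the argument by a standard packing based on bounded geometry of $M$: the balls $B_M(\iota(x_j), \rho/2)$ are pairwise disjoint and have uniformly bounded volume from below, while $B_M(y, R)$ has uniformly bounded volume from above for any fixed $R$. Hence only boundedly many centers $\iota(x_j)$ can lie in any such $B_M(y,R)$, which is exactly the required uniform local finiteness of $\{B_j\}$ in $M$.
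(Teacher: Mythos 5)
Your proposal is correct in substance, but it takes a genuinely different route from the paper. The paper never covers $\Sigma$ intrinsically: it applies Lemma \ref{UFLC} to the ambient manifold $M$ to get a uniformly locally finite family of balls $B_M(y_m,\vartheta(\Sigma)/8)$, keeps only those meeting $\Sigma$, picks a point $x_{m_j}\in\Sigma$ in each, and sets $A_j=\Sigma(x_{m_j},\vartheta(\Sigma)/2)$ and $B_j=B_M(y_{m_j},\vartheta(\Sigma))$; condition 2 of Definition \ref{UES} is used only to see that these sheets cover $\Sigma$, and uniform local finiteness of $\{B_j\}$ in $M$ is inherited for free from the cover of $M$, so no separation or volume-packing argument (and neither Lemma \ref{sbg} nor Lemma \ref{equivalence}) is needed. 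You instead cover $\Sigma$ intrinsically via Lemma \ref{sbg} plus Lemma \ref{UFLC} applied to $\Sigma$, and then must convert the $d_\Sigma$-separation of the centers into $d_g$-separation and finish by packing under bounded geometry of $M$; you correctly identify that condition 2 of Definition \ref{UES} is what rules out self-accumulation of $\Sigma$, and the packing count is standard, so the argument closes. Two details to tighten: Lemma \ref{equivalence} has $d_\Sigma(x_1,x_2)<\nu_c$ as its hypothesis, so it cannot be invoked knowing only that $d_g$ is small; within a single sheet you should obtain $d_\Sigma\lesssim d_g$ directly from the uniform graph bounds in condition 1 (or first bound $d_\Sigma$ by those bounds and only then apply the lemma, after shrinking the dichotomy threshold), which turns your constant $\delta_1/2$ into $\delta_1/C$ but does not affect the conclusion. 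Also, you should require $\vartheta(\Sigma)/4$ to be below the corresponding scale $\delta(\Sigma)$ so that Lemma \ref{UFLC} applies on $\Sigma$ at scale $\delta_1$, and allow $B_j$ a slightly larger radius (or note attainment of the nearest point) to absorb the boundary case when covering the closed tube. The trade-off: the paper's construction is shorter and makes uniform local finiteness automatic, while yours places the centers on $\Sigma$ and uses intrinsic balls $A_j$, at the cost of the separation-plus-packing step.
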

 \begin{proof}
 Recall $\varpi(\Sigma)>0$ in Definition \ref{UES} and $\delta(M)>0$ in Lemma \ref{UFLC}.
     Choose $\vartheta(\Sigma)=\frac{1}{2}\min \{\delta(M),\varpi(\Sigma)\}$. By Lemma \ref{UFLC}, we can find a uniformly locally finite cover $B_M(y_m,\vartheta(\Sigma)/8)$ of $M$, such that $B_M(y_m,\vartheta(\Sigma))$ is also a uniformly locally finite cover of $M$.
     For every $m_j$, such that $B_M(y_{m_j},\vartheta(\Sigma)/8)\cap \Sigma\neq \emptyset$, pick an $x_{m_j}\in B_M(y_{m_j},\vartheta(\Sigma)/8)\cap \Sigma$. By Definition \ref{UES}, we know that $\Sigma(x_{m_j},\vartheta(\Sigma)/4)$ is the unique component of $\Sigma$ in $B_M(x_{m_j},\vartheta(\Sigma)/4)$. Therefore, $$B_M(y_{m_j},\vartheta(\Sigma)/8)\cap \Sigma\subset B_M(x_{m_j},\vartheta(\Sigma)/4)\cap \Sigma=\Sigma(x_{m_j},\vartheta(\Sigma)/4).$$ Since $B_M(y_m,\vartheta(\Sigma)/8)$ covers $M$, we know $\{\Sigma(x_{m_j},\vartheta(\Sigma)/4)\}_{{j}\geq 1}$ covers $\Sigma$. We may choose
     \begin{align}
         A_j:=\Sigma(x_{m_j},\vartheta(\Sigma)/2),
     \end{align} so that $\{A_j\}$ covers $\Sigma.$  Then, we choose \begin{align}
         B_j:=B_M(y_{m_j},\vartheta(\Sigma)).
     \end{align} Notice that for each $j,$ if $x\in M$ and $d_g(x,A_j)<\vartheta(\Sigma)/4$, then $d_g(x,y_{m_j})<\vartheta(\Sigma)/2+\vartheta(\Sigma)/4+\vartheta(\Sigma)/8$. So, $\{x\in M: d_g(x,A_j)<\vartheta(\Sigma)/4\}\subset B_j$. Finally, $\{B_j\}_{j\geq 1}$ is uniformly locally finite in $M$, since $\{B_M(y_m,\vartheta(\Sigma))\}$ is a uniformly locally finite cover of $M$.
 \end{proof}

By the above proposition, we can choose a smooth partition of unity $\{\psi_j\}_{j\geq 1}$ on $\cup_k B_j$ and $\{\phi_j\}_{j\geq 1}$ on $\Sigma$ respectively. We require $\sum_j \psi_j$ to be uniformly bounded, 
$\psi_j\equiv 1$ on $\{x\in M: d_g(x,A_j)<\vartheta(\Sigma)/8\}$, and $\text{supp}(\psi_j)\subset B_j$. We also require $\sum_j \phi_j=1$ and $\text{supp}(\phi_j)\subset A_j$. Note that $\{\psi_j\}$ is subordinate to $\{B_j\}$, and $\{\phi_j\}$ is subordinate to $\{A_j\}$.
\subsection{Unit band Projection Estimate}
To obtain the $\log$-scale spectral projection estimates, we need the unit band spectral projection estimates.
\begin{proposition}\label{unitband}
    Let $M$ be an $n$-dimensional manifold with bounded geometry and non-positive secitonal curvature, and let $\Sigma$ be a $k$-dimensional uniformly embedded submanifold of $M.$ Let $\mu(q)$ be defined as in \eqref{mu}.     When $q=\frac{2n}{n-1}$ and $k =n-1$, or  $q= 2$ and $k \leq n-2$, we have
\begin{align}
    ||\mathbf{1}_{[\lambda,\lambda+1]}(P)||_{L^2(M)\to L^q(\Sigma)}\lesssim\lambda^{\mu(q)}(\log\lambda)^{1/2}.
\end{align}     
Meanwhile, if $q\geq 2$ otherwise, we have
\begin{align}
    ||\mathbf{1}_{[\lambda,\lambda+1]}(P)||_{L^2(M)\to L^q(\Sigma)}\lesssim\lambda^{\mu(q)}.
\end{align} 
\end{proposition}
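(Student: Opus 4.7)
The plan is to reduce the global $L^2(M)\to L^q(\Sigma)$ bound to a uniform family of local bounds on compact model pairs, using the partition-of-unity framework of Section 2.1 together with finite propagation speed. First I would dominate $\mathbf{1}_{[\lambda,\lambda+1]}(P)$ by a smoother multiplier: pick a nonnegative Schwartz function $\chi$ with $\chi\geq 1$ on $[0,1]$ whose Fourier transform $\hat\chi$ is supported in $(-\vartheta(\Sigma)/8,\vartheta(\Sigma)/8)$. Since
$$\chi(P-\lambda)=\frac{1}{2\pi}\int \hat\chi(t)\, e^{-it\lambda}\cos(tP)\, dt,$$
finite propagation speed for $\cos(tP)$ forces the Schwartz kernel of $\chi(P-\lambda)$ to vanish whenever $d_g(x,y)\geq\vartheta(\Sigma)/8$, so it suffices to establish the stated bound with $\chi(P-\lambda)$ in place of the sharp spectral cutoff.

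Using the partitions $\{\phi_j\}$ on $\Sigma$ and $\{\psi_j\}$ on $M$, together with $\psi_j\equiv 1$ on the $\vartheta(\Sigma)/8$-neighborhood of $A_j$, the kernel support gives
$$\phi_j R_\Sigma\,\chi(P-\lambda)f \;=\; \phi_j R_\Sigma\,\chi(P-\lambda)(\psi_j f).$$
For each $j$ I would embed $(B_j,A_j)$ into a compact model $(\widetilde M_j,\widetilde\Sigma_j)$ whose geometric constants are uniformly controlled in $j$, using the bounded geometry of $M$ together with Lemma \ref{sbg}. A second application of finite propagation speed, this time on both $M$ and $\widetilde M_j$, identifies $\chi(P-\lambda)(\psi_j f)$ with $\chi(\widetilde P_j-\lambda)(\psi_j f)$ on $A_j$, where $\widetilde P_j=\sqrt{\Delta_{\widetilde g_j}}$. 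The classical BGT estimate on $(\widetilde M_j,\widetilde\Sigma_j)$ then yields, with constants uniform in $j$,
$$\|\phi_j R_\Sigma\,\chi(P-\lambda)f\|_{L^q(\Sigma)}\lesssim \lambda^{\mu(q)}(\log\lambda)^{1/2}\|\psi_j f\|_{L^2(M)}$$
at the endpoints $(q,k)=(2n/(n-1),n-1)$ and $(2,k)$ with $k\leq n-2$, and without the $(\log\lambda)^{1/2}$ factor in the other cases.

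Summing over $j$, the uniform finite multiplicity of $\{A_j\}$ and $\{B_j\}$ given by Lemma \ref{UFLC} yields, for $q\geq 2$,
$$\|R_\Sigma\,\chi(P-\lambda)f\|_{L^q(\Sigma)}^q \;\lesssim\; \sum_j \|\phi_j R_\Sigma\,\chi(P-\lambda)f\|_{L^q(\Sigma)}^q \;\lesssim\; \lambda^{q\mu(q)}(\log\lambda)^{q/2}\sum_j \|\psi_j f\|_{L^2(M)}^q,$$
and the last sum is controlled by $\|f\|_{L^2(M)}^q$ via the inclusion $\ell^2\hookrightarrow\ell^q$ (valid since $q\geq 2$) and the bounded overlap of the cover $\{B_j\}$. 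Taking $q$-th roots gives the proposition.

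The main obstacle is verifying that the compact models $(\widetilde M_j,\widetilde\Sigma_j)$ can be constructed with geometric data bounded uniformly in $j$, so that the implied constant in the compact BGT estimate is a single number independent of $j$. This is precisely what the bounded-geometry hypothesis on $M$ and the uniform-embedding condition in Definition \ref{UES} guarantee: the metric and the second-fundamental-form data of $(B_j,A_j)$, when read in normal coordinates at $x_{m_j}$, lie in a uniformly bounded family in $C^\infty$, so they can be cut off and glued into a fixed compact background in a uniformly controlled way, producing a family of compact pairs whose BGT constants can be taken to be uniform.
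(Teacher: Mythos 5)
Your overall architecture --- dominating the sharp projector by a smooth multiplier with compactly supported Fourier transform, localizing via finite propagation speed to the pairs $(A_j,B_j)$ from Section 2.1, invoking the Burq--G\'erard--Tzvetkov restriction estimate locally, and summing with bounded overlap and the $\ell^2\hookrightarrow\ell^q$ embedding --- is the same as the paper's. The one place where you depart is the crucial one: you treat the compact-manifold BGT theorem as a black box applied to glued compact models $(\widetilde M_j,\widetilde\Sigma_j)$ and assert that the resulting constants are uniform in $j$. As written, that assertion is a genuine gap. The BGT theorem produces a constant depending on the compact manifold and submanifold with no stated modulus of dependence on the geometry, and your family of glued models is infinite, so uniformity does not follow from the theorem itself. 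What is needed is a quantitative version: a constant depending only on the dimension, on $C^N$ bounds for the metric and for the graph functions $h_x$ of Definition \ref{UES} in normal coordinates, and on a lower bound for the injectivity radius. Establishing that dependence amounts to reopening the proof, which is exactly what the paper does: it represents the smoothed projector through the parametrix of BGT's Theorem 4 as $\la^{\frac{n-1}{2}}\int e^{-i\la d_g}a + R$ with all seminorms of $a$ controlled by bounded-geometry data, runs the dyadic $TT^*$ decomposition \eqref{eqttm}, and checks that $\epsilon$ and the constants $C_0,C_1,C_2$ can be chosen independently of $j$. Mere $C^\infty$-precompactness of the local geometries, which is your closing argument, does not by itself yield uniform constants, because one would still have to know that the BGT constant depends (locally uniformly) on such perturbations of the data --- and that again requires inspecting the proof or a further limiting argument.

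Two smaller points. Your choice of $\hat\chi$ supported in a symmetric interval about the origin is fine for finite propagation speed (using the half-wave group $e^{itP}$), but the parametrix you would need to quantify is the one the paper quotes, with $\hat\rho$ supported in $[\epsilon/2,\epsilon]$ away from $0$ so that the amplitude is supported on an annulus $d_g\sim\epsilon$; either normalization can be made to work, but you must commit to one when carrying out the uniform local estimate. Your summation step is correct and matches the paper's vector-valued operators $\mathcal A$ and $\mathcal B$ in \eqref{boundedness}: by finite overlap, $\sum_j\|\psi_jf\|_{L^2}^q\le\bigl(\sum_j\|\psi_jf\|_{L^2}^2\bigr)^{q/2}\lesssim\|f\|_{L^2(M)}^q$ for $q\ge 2$.
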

\begin{proof}
Recall the definition of $\nu_2$ from Lemma \ref{equivalence}. For some small $0<\epsilon<\frac{1}{8}\min\{{\vartheta(\Sigma)},\nu_2\}$, which we are going to choose later, there exist $\rho\in\mathcal{S}(\R)$ satisfying
\begin{align}
    \rho(0)=1 \text{ and }\hat{\rho}(t)=0 \text{ if }t\notin\left[\frac{\epsilon}{2}, {\epsilon}\right].
\end{align}
We define the local operator \begin{align}
    \sigma_\lambda=\rho(\lambda-P)+\rho(\lambda+P).
\end{align}
We know \begin{align}
    \sigma_\lambda=\pi^{-1}\int^{\epsilon}_0\hat{\rho}(t)e^{i\lambda t}\cos(tP)dt.
\end{align}
By finite propagation speed, $\phi_j\sigma_\lambda$ is supported in a neighborhood of size $\epsilon$ of $A_j.$
Therefore, we may deduce \begin{align}
    \text{supp}(\phi_j\circ R_\Sigma\circ \sigma_\lambda)\subset \{x\in M: d_g(x,A_j)<\vartheta(\Sigma)/8\}.
\end{align}

We can follow the argument of Theorem 3 of \cite{BGT}, (i.e. \eqref{log} and \eqref{unitbandcompact}), to obtain
\begin{align}\label{unitAj2}
    ||\phi_j\circ R_\Sigma\circ \sigma_\lambda||_{L^2(B_j)\to L^q(A_j)}\leq C_\Sigma \log(\lambda)^{1/2}\lambda^{\mu(q)},
\end{align}if $k=n-2,n-1$ and $q=2$, or $k=n-1$ and $q=\frac{2n}{n-1}$. Meanwhile, if $q\geq 2$ otherwise, we could obtain
\begin{align}\label{unitAj}
    ||\phi_j\circ R_\Sigma\circ \sigma_\lambda||_{L^2(B_j)\to L^q(A_j)}\leq C_\Sigma \lambda^{\mu(q)}.
\end{align}

We sketch the proof here for the completeness. For some $\tilde x\in A_j$, let a coordinate chart on $M$ be given by the exponential map $\exp^M_{\tilde x}:\R^n\to M$. By Theorem 4 of \cite{BGT}, there exists $\epsilon>0$, such that for any $x\in\R^n$ with $|x|\leq c\epsilon,$ \begin{align}
    \sigma_\la( f)(x)=\la^{\frac{n-1}{2}}\int_{\R^n}e^{-i\lambda d_g(\exp^M_{\tilde x}(x),\exp^M_{\tilde x}(x'))}a(x,x')f(x')dx'+R (f)(x).
\end{align}
with $|\partial^\alpha_{x,x'}a(x,x')|=O(1)$ for all $\alpha$. In addition, $a(x,x')$  is supported in $\{|x|\leq c_0\epsilon\leq |x'|\leq c_1\epsilon< 1\}$ and does not vanish in $d_g(\exp^M_{\tilde x}(x),\exp^M_{\tilde x}(x'))\in[c_2\epsilon,c_3\epsilon].$ Meanwhile, $||Rf||_{L^\infty}\lesssim||f||_{L^2}$ and the kernel of $R$ is supported in $\{(x,x'):d_g(\exp^M_{\tilde x}(x),\exp^M_{\tilde x}(x'))\leq \epsilon\}$. The implicit constants and $\epsilon$ are chosen to be independent of $j$ by the bounded geometry condition. 

We may cover $A_j$ by balls of size $\epsilon$. Since $A_j\subset B_j$, which are geodesic balls of uniformly bounded volume, the number of balls of size $\epsilon$ needed to cover $A_j$ is uniformly bounded. In addition, the intersection of each ball of size $\epsilon$ and $A_j$ contains a unique connected component, by condition 2 of Definition \ref{UES}.

Thus, it suffices to consider the operator  $\mathcal{T}$, such that for $|x|\leq c\epsilon$,\begin{align}
    \mathcal{T}f(x)=\int_{\R^n}e^{-i\lambda d_g(\exp^M_{\tilde x}(x),\exp^M_{\tilde x}(x'))}a(x,x')f(x')dx'.
\end{align}

 Let $\exp^\Sigma_{\tilde x}:(-c_4\epsilon,c_4\epsilon)^k\to \Sigma$, the exponential map at $\tilde x$ in $\Sigma$, be a coordinate chart on $\Sigma.$ For $|z|\leq c_4\epsilon,$ let $x(z):=(\exp^M_{\tilde x})^{-1}\circ\iota\circ\exp^\Sigma_{\tilde x}(z).$ We denote $x=x(z)$ and $x'=x(z').$ We define $Tf(z)=\mathcal{T}f(x(z))$, and denote the kernel of $T$ as $K.$

Define $\theta$ to be an even bump function with $\theta\equiv1$ on $[-1,1]$ and is supported in $[-2,2],$ and define \begin{align}
    \theta_m(\tau):=(\theta(2^{m}\tau)-\theta(2^{m+1}\tau)).
\end{align}
Choose $\theta$, such that 
 for $\tau\leq\epsilon$, \begin{align}
1=\theta(\lambda\tau)+\sum_{m=\log \epsilon^{-1}/\log2}^{\log\lambda/\log2}\theta_m(\tau).
\end{align}
Let $K_0(z,z')=\theta(\la|z-z'|)K(z,z')$ and let $(TT^*)_m$ be the operator with kernel
\begin{align}
    K_m(z,z')=\theta_m(|z-z'|)K(z,z').
\end{align}
By the support property of $\theta$, $K_0$ is supported on $|z-z'|\leq \lambda^{-1}$, and $K_0$ is bounded by\begin{align}
    |K_0(z,z')|\leq C_0(1+\lambda|z-z'|)^{-\frac{n-1}{2}}.
\end{align}
Then, \begin{align}
    \sup_z||K_0(z,z')||_{L_{z'}^{q/2}(\R^k)}\leq C_0\lambda^{-\frac{2k}{q}}.
\end{align} 

Next, show as in Proposition 6.3 of \cite{BGT} that we may choose an $\epsilon>0$, such that for all $\lambda^{-1}<2^{-m}\leq \epsilon,$ 
 \begin{align}\label{eqttm}
    &||(TT^*)_m||_{L^1\to L^{\infty}}\leq C_1\left(\frac{2^m}{\lambda}\right)^{\frac{n-1}{2}}\\
    &||(TT^*)_m||_{L^2\to L^2}\leq C_2\left(\frac{2^m}{\lambda}\right)^{\frac{n-1}{2}+\frac{k-1}{2}},
\end{align}
which implies \eqref{unitAj} and \eqref{unitAj2} by (6.7) of \cite{BGT}.
 
  We now choose $\epsilon>0$, such that for $|z-z'|\lesssim\epsilon$, $d_\Sigma((\exp^M_{\tilde x})^{-1}x,(\exp^M_{\tilde x})^{-1}x'))$ is close enough to $|x-x'|$ in the $C^\infty(\R^n\times \R^n)$ topology. By the bounded geometry assumption, we have uniform control of the metrics of $M$ and $\Sigma$ and all their derivatives. Thus, the choice of $\epsilon$ is independent of $j.$ 
Since $M$ and $\Sigma$ are both manifolds with bounded geometry, and the volume of $A_j$ is uniformly bounded, the constants $C_0,$ $C_1$ and $C_2$ are taken to be independent of $j$. \\

We define the vector valued operator $\mathcal{A}:L^q(\Sigma)\to (\ell^q_j,L^q(A_j))$, $$\mathcal{A}:=(\phi_1,\phi_2,...).$$ We also define the operator $\mathcal{B}: (\ell^2_j,L^2(B_j))\to L^2(M)$,
\begin{align}
    (f_1,f_2,...)\mapsto \sum_j \psi_jf_j.
\end{align}
 By the uniform locally finiteness of $\{A_j\}_{j\geq 1}$ and $\{B_j\}_{j\geq 1}$, we have for all $q>2,$
 \begin{align}\label{boundedness}
     ||\mathcal{A}||_{L^q(\Sigma)\to(\ell^q_j,L^q(A_j))}=O(1) \text{ and }||\mathcal{B}||_{(\ell^2_j,L^2(B_j))\to L^2(M)}=O(1). 
 \end{align}
 For any $f\in L^2(M)$, by \eqref{unitAj2} and \eqref{boundedness},
\begin{align}\label{1}
\begin{split}
||R_\Sigma\mathbf{1}_{[\lambda,\lambda+1]}(P)f||_{L^q(\Sigma)}&\lesssim||R_\Sigma\sigma_\lambda f||_{L^q(\Sigma)}\\
&\lesssim ||\mathcal{A}\sigma_\lambda f||_{(\ell^q_j, L^q(A_j))}\\
&\lesssim \log(\lambda)^{1/2}\lambda^{\mu(q)}\left\|||\psi_jf||_{L^2(B_j)}\right\|_{\ell_j^q(\mathbb{N})}\\
&\lesssim \log(\lambda)^{1/2}\lambda^{\mu(q)}\sqrt{\sum_j ||\psi_jf||^2_{L^2(B_j)}}\\
&\lesssim\log(\lambda)^{1/2}\lambda^{\mu(q)}||f||_{L^2(M)},
\end{split}
\end{align}
if $k=n-2$ and $q=2$, or $k=n-1$ and $q=\frac{2n}{n-1}$.
Similarly, by \eqref{unitAj} and \eqref{boundedness}, \begin{align}\label{2}
    ||R_\Sigma\mathbf{1}_{[\lambda,\lambda+1]}(P)||_{L^2(M)\to L^q(\Sigma)}\lesssim||R_\Sigma\sigma_\lambda||_{L^2(M)\to L^q(\Sigma)}\lesssim\lambda^{\mu(q)},
\end{align}
if $q\geq 2$ otherwise.
\end{proof}

\subsection{Proof of Theorem \ref{theorem1}} Now, we use the unit band estimate to prove the $\log$-scale estimate.
For $T\sim \log(\lambda)$, define 
\begin{align}\label{rhola}
    \rho_\lambda:=\rho\left(\frac{\lambda-P}{T}\right).
\end{align}
By an $L^2$ orthogonality argument, we notice \begin{align}
    ||R_\Sigma\mathbf{1}_{[\la,\la+\log\la^{-1}]}||_{L^2(M)\to L^q(\Sigma)}\sim||R_\Sigma\rho_\la||_{L^2(M)\to L^q(\Sigma)}.
\end{align}
 By a $TT^*$ argument, to prove Theorem \ref{theorem1}, it suffices to prove\begin{align}
    ||R_\Sigma(\rho_\lambda\rho_\lambda^*)R_\Sigma^*||_{L^{q'}(\Sigma)\to L^q(\Sigma)}\lesssim\la^{2\mu(q)}/\log\la.
\end{align}
Set $\Psi=\rho^2$. For $f\in L^{q'}(\Sigma)$, 
\begin{align}
    R_\Sigma(\rho_\lambda\rho_\lambda^*)R_\Sigma^*(f)(x)=\int_\Sigma\int \frac{1}{T}\cos(tP)(x,y)\hat{\Psi}\left(\frac{t}{T}\right)e^{i\lambda t}f(y)dtdy.
\end{align}
 Let $\Phi(t)$ be a function supported in $|t|\leq1$, and equals to 1 on $|t|\leq\frac{1}{2}.$ For $f\in L^{q'}(\Sigma),$ define the local and global part of $\rho_\lambda\rho_\lambda^*$ respectively.
 \begin{align}\label{lla}
     L_\lambda(f):=\frac{1}{T}\int \cos(tP)\hat{\Psi}\left(\frac{t}{T}\right)\Phi(t)e^{i\lambda t}fdt
 \end{align}
 \begin{align}\label{gla}
     G_\lambda(f):=\int \frac{1}{T}\cos(tP)\hat{\Psi}\left(\frac{t}{T}\right)(1-\Phi(t))e^{i\lambda t}fdt
 \end{align} 
 
Since $\rho_\lambda\rho_\lambda^*=L_{\la}+G_\la$, it suffices to show $$||R_\Sigma G_\lambda R^*_\Sigma||_{L^{q'}(\Sigma)\to L^q(\Sigma)}=O(\lambda^{2\mu(q)}T^{-1})$$ and $$||R_\Sigma L_\lambda R^*_\Sigma||_{L^{q'}(\Sigma)\to L^q(\Sigma)}=O(\lambda^{2\mu(q)}T^{-1}).$$
\subsubsection{Global estimate}
We obtain the global estimate via interpolation. 
We first obtain an $L^2$ estimate following the argument of Theorem 5.1 in \cite{xuehua}. We sketch the proof for completeness. First, define $\Upsilon\in\mathcal{S}(\R)$ to be the Fourier transform of $(1-\Phi(\cdot))\hat{\Psi}\left(\frac{\cdot}{T}\right)$. Notice that $|\Upsilon(\tau)|\leq T(1+|\tau|)^{-N}$ for any $N\in\mathbb{N}$. By the unit band estimate \eqref{1}, \eqref{2} and an almost orthogonality argument, we obtain the desired $L^2$ estimates,
\begin{align}\label{4}
||R_\Sigma G_\lambda R_\Sigma^*||_{L^2(\Sigma)\to L^2(\Sigma)}\lesssim\begin{cases}
     \lambda^{2\mu(2)}, &
\text{if } k\neq n-2,\\
\lambda^{2\mu(2)}\log(\lambda), &
\text{if } k=n-2.
\end{cases}
\end{align}

 From Lemma 3.6 in \cite{hstz}, we know
 \begin{align}\label{logLinfty}
     |G_\lambda(x,y)| \lesssim{\lambda^{\frac{n-1}{2}}e^{c_MT}}
 \end{align}
for some $c_M>0$ only depending on $M.$ Thus,\begin{align}\label{sup}
     ||R_\Sigma G_\lambda R_\Sigma^*||_{L^1(\Sigma)\to L^\infty(\Sigma)} \lesssim {\lambda^{\frac{n-1}{2}}e^{c_MT}}.
 \end{align} 

Now, we interpolate \eqref{sup} and \eqref{4}.
If $k\leq n-2$, then $\mu(2)=\frac{n-1-k}{2}$. Then \begin{align}
     ||R_\Sigma G_\lambda R_\Sigma^*||_{L^{q'}(\Sigma)\to L^q(\Sigma)} \lesssim {\lambda^{\frac{n-1}{2}+\frac{n-1-2k}{q}}}e^{c_MT(1-\frac{2}{q})}.
 \end{align}  Notice that $\frac{n-1}{2}+\frac{n-1-2k}{q}<2\mu(q)$ for all $q>2$. So, we may find some $0<b<2\mu(q)-(\frac{n-1}{2}+\frac{n-1-2k}{q})$. Now, we may choose $T=c^*\log(\lambda)$ with $$c^*=\frac{b}{c_M(1-\frac{2}{q})},$$ and obtain 
\begin{align}
    ||R_\Sigma G_\lambda R^*_\Sigma||_{L^{q'}(\Sigma)\to L^q(\Sigma)}=O(\lambda^{2\mu(q)}T^{-1}).
\end{align}\\

If $k=n-1$, then $\mu(2)=\frac{1}{4}$. \begin{align}
     ||R_\Sigma G_\lambda R_\Sigma^*||_{L^{q'}(\Sigma)\to L^q(\Sigma)} \lesssim {\lambda^{\frac{n-1}{2}-\frac{n-2}{q}}}e^{c_MT(1-\frac{2}{q})}.
 \end{align}
 For $q>\frac{2n}{n-1}$, we have $\frac{n-1}{2}-\frac{n-2}{q}<2\mu(q)$. Similarly, we choose $T=c^*\log(\lambda)$ with $c^*=\frac{b}{c_M(1-\frac{2}{q})}$ for some $0<b<2\mu(q)-(\frac{n-1}{2}-\frac{n-2}{q})$ and obtain 
\begin{align}
    ||R_\Sigma G_\lambda R^*_\Sigma||_{L^{q'}(\Sigma)\to L^q(\Sigma)}=O(\lambda^{2\mu(q)}T^{-1}).
\end{align}

\subsubsection{Local estimate}
In this subsection, we consider the operator with kernel
$$L_\lambda(x,y):=\frac{1}{T}\int_{-1}^1 \cos(tP)(x,y)\hat{\Psi}\left(\frac{t}{T}\right)\Phi(t)e^{i\lambda t}dt.$$
By the unit band estimate \eqref{1}, \eqref{2} and a $TT^*$ argument, we have $$||R_\Sigma\mathbf{1}_{[\lambda,\lambda+1]}(P)(R_\Sigma)^*||_{L^{q'}(\Sigma)\to L^q(\Sigma)}\lesssim \lambda^{2\mu(q)},$$ for $k\geq n-2$, $q> 2$ or $k=n-1$, $q>\frac{2n}{n-1}$. Meanwhile, $\Phi(\cdot)\hat{\Psi}\left(\frac{\cdot}{T}\right)$ is a compactly supported smooth function and $|\Phi(\cdot)\hat{\Psi}\left(\frac{\cdot}{T}\right)|$ is bounded independently of $T$. If we define $\Xi$ to be the Fourier transform of $\Phi(\cdot)\hat{\Psi}\left(\frac{\cdot}{T}\right)$, then $\Xi$ is a Schwartz function with $\Xi(\tau)\lesssim(1+|\tau|)^{-N}$ for any $N\in\mathbb{N}.$
We may write $$L_{\lambda}=\frac{1}{T}(\Xi(\lambda-P)+\Xi(\lambda+P)).$$
Thus, if $k\leq n-2$ and $p> 2$ or if $k= n-1$ and $p> \frac{2n}{n-1}$, we may use an orthogonality argument and Proposition \ref{unitband} to obtain
\begin{align}\label{localest}
    \begin{split}
    ||R_\Sigma L_{\lambda} (R_\Sigma)^*||_{L^{q'}(\Sigma)\to L^q(\Sigma)}\lesssim \frac{1}{T}\lambda^{2\mu(q)}.
    \end{split}
\end{align}
Thus,
\begin{align}
\begin{split}
    ||R_\Sigma\rho_\la\rho^*_\la(P)R_\Sigma^*f||_{L^q(\Sigma)}&\lesssim||R_\Sigma G_\la R_\Sigma^*f||_{L^q(\Sigma)}+||R_\Sigma L_\la R_\Sigma^*f||_{L^q(\Sigma)}\lesssim\la^{2\mu(q)}/\log\la||f||_{L^{q'}(\Sigma)}.
    \end{split}
\end{align}
This completes the proof of Theorem \ref{theorem1}.

\section{Even asymptotically hyperbolic surface}
Let $(M,g)$ be an even asymptotically hyperbolic surface with bounded geometry and curvature pinched below 0.
We follow \cite{hstz} to decompose \begin{align}\label{decomp}
    M=M_{tr}\cup M_\infty,
\end{align} such that $M_{tr}$ is compact, and $M_{\infty}$ asymptotically agrees with a background manifold $(\tilde M,\tilde g)$, which satisfies favorable spectral projection estimates.

Let $S^*M$ be the cosphere bundle of $M$ and denote the principle symbol of $P$ by $p(x,\xi)$. Let $(x(t),\xi(t))=e^{tH_p}(x,\xi)$, where $e^{tH_p}$ denote the geodesic flow on the cotangent bundle. Define \begin{align}
     \Gamma_\pm:=\{(x,\xi)\in S^*M:x(t)\not\to\infty \text{ as }t\to\pm\infty\}.
 \end{align}
Define $\pi:S^*M\to M$ with $\pi(x,\xi)=x.$ The trapped set of $M$ is \begin{align}
     \pi(\Gamma_+\cap\Gamma_-).
 \end{align}
We require $M_{tr}$ to be a compact subset of $M$ that
 contains a neighborhood of the trapped set. 
 
We could construct an asymptotically hyperbolic, simply connected manifold with negative curvature and no resonance at the bottom of the spectrum, $\tilde M$, as in \cite{hstz}, such that if $M_\infty$ is appropriately defined, then the metric, $g$, in $M_\infty$ agrees with the metric, $\tilde g$, in $\tilde M.$
 The metric of an asymptotically hyperbolic surface near the boundary is given by\begin{align}
     4 \frac{dr^2 +s(r,\theta)d\theta^2}{(1 -r^2)^2 },
\end{align}
where $ s \in C^\infty$ and $ s(1,\theta) = 1.$ Let $\chi \in C^\infty_0 ((-1,1))$
 with $\chi = 1$ in $(-1/2,1/2)$, then we can define the metric on $\tilde 
 M$ as\begin{align}
      4 \frac{dr^2 +r^2d\theta^2}{(1 -r^2)^2 }
+\chi(R(1-r))4\frac{(s(r,\theta)-r^2)d\theta^2}{(1 -r^2)^2 },
 \end{align}
   where $R$ is a large enough constant. Then, the metric of $\tilde M$ agrees with the metric of $M$ when $r \geq1-(2R)^{-1}$. Furthermore, note that $|s(r,\theta)-r^2| \leq R^{-1}$ in the support of $\chi(R(1-r))$.
 By choosing $R$ sufficiently large, the Gaussian curvature of $(\tilde M,\tilde g)$ is bounded between $-3/2$ and $-1/2$. Hence, $\tilde 
 M$ is a simply connected
 manifold with curvature pinched below 0 and no conjugate points. Finally, $\tilde 
 M$ has no resonance at the bottom of the spectrum by Lemma 2.3 of \cite{hstz}. We denote $\tilde P=\sqrt{-\tilde \Delta}$, where $\tilde \Delta$ is the Laplacian operator on $\tilde M.$

Notice that being a geodesic is a local property. Thus, if we denote a connected component of $\gamma$ in $M_\infty$ by $\gamma_1$, there exists a geodesic $\tilde \gamma_1 \in \tilde M$, such that $\gamma_1$ agrees with $\tilde \gamma_1$ whenever $M$ agrees with $\tilde M$. 

In this section, we first prove a spectral projection estimate of $P$ with an arbitrarily small spectral window on $(\tilde M,\tilde g)$ following \cite{chenhassell}. Then, we prove a subcritical $\log$-scale spectral projection restricted to compact geodesic of $M$ following \cite{Mdblair} and finally use these two estimates to prove Theorem \ref{nb}.
\subsection{Asymptotically hyperbolic and simply connected surface}
We prove that the background manifold $(\tilde M,\tilde g)$ satisfies a sharp spectral projection estimate for an arbitrarily small spectral window.
\begin{proposition}\label{asymptot}
 Let $(\tilde M,\tilde g)$ be an asymptotically hyperbolic and simply connected surface with curvature pinched below 0. For $\lambda\geq 1$, $\eta\in(0,1]$, $q\geq 2$ and $\tilde\gamma\subset\tilde M$ being a geodesic,
    \begin{align}
        ||R_{\tilde \gamma}\mathbf{1}_{[\lambda,\lambda+\eta]}(\tilde P)||_{L^2(\tilde M)\to L^q(\tilde \gamma)}\lesssim\lambda^{\mu(q)}\eta^{1/2}.
    \end{align}
\end{proposition}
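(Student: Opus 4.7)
The plan is to combine a $TT^*$ reduction with the explicit spectral measure kernel estimates of Chen--Hassell \cite{chenhassell}, which apply to $(\tilde M,\tilde g)$ since it is a non-trapping, simply connected asymptotically hyperbolic surface with no resonance at the bottom of the spectrum and curvature pinched below $0$. Applying $TT^*$ to $T = R_{\tilde\gamma}\mathbf{1}_{[\lambda,\lambda+\eta]}(\tilde P)$, the desired estimate $\|T\|_{L^2\to L^q}\lesssim \lambda^{\mu(q)}\eta^{1/2}$ reduces to
\begin{align*}
\bigl\|R_{\tilde\gamma}\,\mathbf{1}_{[\lambda,\lambda+\eta]}(\tilde P)\,R_{\tilde\gamma}^*\bigr\|_{L^{q'}(\tilde\gamma)\to L^q(\tilde\gamma)} \lesssim \lambda^{2\mu(q)}\eta,
\end{align*}
and writing $\mathbf{1}_{[\lambda,\lambda+\eta]}(\tilde P) = \int_\lambda^{\lambda+\eta} dE_{\tilde P}(s)\,ds$ via the spectral theorem further reduces to
\begin{align*}
\bigl\|R_{\tilde\gamma}\,dE_{\tilde P}(s)\,R_{\tilde\gamma}^*\bigr\|_{L^{q'}(\tilde\gamma)\to L^q(\tilde\gamma)} \lesssim \lambda^{2\mu(q)} \quad\text{uniformly for } s\in[\lambda,\lambda+\eta].
\end{align*}

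Chen--Hassell's kernel estimates decompose $dE_{\tilde P}(s)(x,y)$ as a sum of oscillatory half-densities with phase essentially $s\,d_{\tilde g}(x,y)$, principal amplitude of order $s^{(n-1)/2}=s^{1/2}$, Schwartz-type decay in $s\,d_{\tilde g}(x,y)$ at short range, and exponential decay in $d_{\tilde g}(x,y)$ at long range coming from the hyperbolic volume growth. Pulling back under an arc-length parametrization of $\tilde\gamma$ produces a one-parameter family of oscillatory integral operators on $\tilde\gamma$ whose phase $s\,d_{\tilde g}(\tilde\gamma(t),\tilde\gamma(t'))$ is non-degenerate along the diagonal because $\tilde M$ is simply connected and has no conjugate points. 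I would then handle the endpoint cases $q=2$ and $q=4$ (both satisfying $2\mu(q)=\tfrac12$) by the standard dyadic decomposition in $|t-t'|$: a pointwise $s^{1/2}$ bound with Young's inequality on the diagonal scale $|t-t'|\lesssim s^{-1}$, a $TT^*$ plus stationary phase argument along $\tilde\gamma$ on each intermediate dyadic scale $s^{-1}<|t-t'|\sim 2^{-m}\lesssim 1$, and exponential decay of the kernel for $|t-t'|\gtrsim 1$. Interpolating the $q=4$ bound with the trivial $L^2\to L^\infty$ estimate (from the $s^{1/2}$ pointwise bound) then covers all $q>4$, while the $q=2$ bound together with interpolation against $q=4$ covers $2\leq q\leq 4$.

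The main obstacle will be translating Chen--Hassell's global spectral measure estimates into restricted kernel bounds on $\tilde\gamma\times\tilde\gamma$ with constants uniform in $s\in[\lambda,\lambda+\eta]$ and in the location of $\tilde\gamma$ in $\tilde M$. The simple connectivity, pinched negative curvature, and asymptotically hyperbolic structure at infinity must be exploited together to ensure that the phase stays non-degenerate and the amplitude stays well-behaved all along $\tilde\gamma$, even as $\tilde\gamma$ escapes to infinity; once that uniformity is established, the one-dimensional oscillatory-integral analysis described above applies on $\tilde\gamma$ independently of where it sits in $\tilde M$.
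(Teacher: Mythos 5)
Your overall skeleton matches the paper: a $TT^*$ reduction, writing the band projection as an integral of the spectral measure over the window (this is where the factor $\eta$ comes from), and then invoking the Chen--Hassell kernel bounds for the spectral measure on $\tilde M$. The paper, however, never needs the oscillatory structure of the spectral measure at all: since $\tilde M$ is Cartan--Hadamard (simply connected, curvature pinched below $0$), the geodesic $\tilde\gamma$ is globally minimizing, so $d_{\tilde g}(\tilde\gamma(t),\tilde\gamma(s))=|t-s|$ exactly, and the absolute-value bounds $|p_\la|\lesssim \la(1+\la d)^{-1/2}$ for $d<1$ and $\la^{1/2}e^{-d/2}$ for $d\geq 1$ are plugged in directly: Young's inequality (i.e.\ $\sup_x\|p_\la(x,\cdot)\|_{L^{q/2}(\tilde\gamma)}$) handles every $q\neq 4$, and the critical $q=4$ case is disposed of by the Hardy--Littlewood--Sobolev theorem applied to the kernel $\la^{1/2}|t-s|^{-1/2}$, which gives $L^{4/3}(\tilde\gamma)\to L^4(\tilde\gamma)$ with constant $\la^{1/2}=\la^{2\mu(4)}$ and no loss. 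In particular the uniformity issue you single out as the main obstacle is a non-issue in this route, because the Chen--Hassell bounds depend only on $d_{\tilde g}(x,y)$ and the restricted distance is literally arclength, wherever $\tilde\gamma$ sits.

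The genuine gap is in your treatment of $q=4$, which is the only place where something beyond Schur/Young is needed (and it cannot be recovered by interpolation, since $\mu(q)$ has a kink at $q=4$, so interpolating $q=4\mp\epsilon$ overshoots $\la^{1/4}$). Your plan is a dyadic decomposition in $|t-t'|$ with ``$TT^*$ plus stationary phase'' on each intermediate scale. But restricted to the geodesic the phase is $s\,d_{\tilde g}(\tilde\gamma(t),\tilde\gamma(t'))=s|t-t'|$, a linear phase with no curvature: at the resonant frequencies $\xi=\pm s$ there is no oscillatory gain, so each dyadic scale $\la^{-1}<2^{-m}\lesssim 1$ contributes $\approx\la^{1/2}$ to the $L^{4/3}\to L^4$ norm and the sum over the $O(\log\la)$ scales produces exactly the logarithmic loss the proposition is designed to avoid. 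To close this you would need either the fractional-integration (HLS) argument as in the paper, which sums the scales for free using only the pointwise bound $\la^{1/2}|t-s|^{-1/2}$, or a genuinely finer analysis in the spirit of Hu's removal of the log at the critical exponent; the stationary-phase mechanism you invoke does not by itself supply the needed gain on a geodesic.
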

\begin{proof}
Let ${\mathbf{P}}_\la=\delta_\la(\tilde P)$, where $\delta_\la$ denotes the Dirac-Delta function. Notice that the spectral measure of $\tilde P$ is ${\mathbf{P}}_\la d\la$.  Then,
\begin{align}
    R_{\tilde \gamma}\mathbf{1}_{[\lambda,\lambda+\eta]}(\tilde P)(R_{\tilde \gamma})^*=\int_{\lambda}^{\lambda+\eta}R_{\tilde \gamma}\mathbf{P}_\kappa (R_{\tilde \gamma})^* d\kappa.
\end{align}
We aim to show \begin{align}\label{interm}
    ||R_{\tilde \gamma}\mathbf{P}_\lambda(R_{\tilde \gamma})^*||_{L^{q'}({\tilde \gamma})\to L^q({\tilde \gamma})}\lesssim \lambda^{2\mu(q)}.
\end{align}
We recall the kernel estimate by Chen and Hassell. By Theorem 5 of \cite{chenhassell}, $\mathbf{P}_\la$ can be represented by a convolution operator with kernel $p_\la(x,y)$, such that \begin{align}\label{kirakirakernelest}
    |{p}_\la(x,y)|\lesssim\begin{cases}
       \la{(1+\la d_{\tilde g}(x,y))^{-1/2}}, & \text{if }d_{\tilde g}(x,y)< 1,\\
       \la^{1/2}e^{-d_{\tilde g}(x,y)/2}, & \text{if }d_{\tilde g}(x,y)\geq 1.
    \end{cases}
\end{align}

Let $d_{{\tilde \gamma}}$ denote the distance on $\tilde \gamma$ with induced metric from $\tilde M.$ Fix $x\in {\tilde \gamma}$ and notice that for any $y\in{\tilde \gamma}$, we have $d_{\tilde g}(x,y)=d_{{\tilde \gamma}}(x,y)$. Meanwhile, we define 
\begin{align}
    p_{\lambda }(x,y)=p_{\lambda }(d_{\tilde g}(x,y)).
\end{align}
For $f\in L^{q'}(\tilde \gamma),$
\begin{align}
    R_{\tilde \gamma}\mathbf{P}_\kappa (R_{\tilde \gamma})^*f=\int_{\tilde \gamma} p_{\kappa}(d_{\tilde g}(\cdot,y))f(y)dy.
\end{align}
By Young's inequality, to prove \eqref{interm}, it suffices to show $$\sup_x||p_{\lambda }(x,y)||_{L^{q/2}_y({\tilde \gamma})}\lesssim\lambda^{\mu(q)}.$$
Assume $q\neq 4$, we use \eqref{kirakirakernelest} to compute
\begin{align*}
    &||p_{\lambda }(x,y)||_{L^{q/2}_y({\tilde \gamma})}\\
    &\lesssim\left(\int_{-\lambda^{-1}}^{\lambda^{-1}}|p_{\lambda }(r)|^{q/2}dr\right)^{2/q}+\left(\int_{\lambda^{-1}<|r|<1} |p_{\lambda }(r)|^{q/2}dr\right)^{2/q} +\left(\int_{|r|>1} |p_{\lambda }(r)|^{q/2}dr\right)^{2/q}\\
    &\lesssim\lambda^{1-2/q}+\left(\int_{\lambda^{-1}<|r|<1}\left(\frac{\lambda}{r}\right)^{q/4}\right)^{2/q}+\lambda^{1/2}\left(\int_{|r|>1}\left(e^{-r/2}\right)^{q/2}\right)^{2/q}\\
    &\lesssim  \lambda^{1-2/q}+ \left(\left.\lambda^{q/4}r^{-q/4+1}\right|^{\lambda^{-1}}_{1}\right)^{2/q}+\lambda^{1/2}\\
    &\lesssim \lambda^{1-2/q}+ \lambda^{1/2}.\\
\end{align*}
Thus, $$\sup_x||p_{\lambda }(x,y)||_{L^{q/2}_y({\tilde \gamma})}\lesssim\begin{cases}
    \lambda^{1/2} & \text{if }q<4,\\
    \lambda^{1-2/q} & \text{if }q>4.
\end{cases}$$

We deal with the $q=4$ case using the Hardy-Littlewood fractional integral theorem.
By \eqref{kirakirakernelest} and assuming ${\tilde \gamma}$ is parametrized by arc length, we have for all $t$ and $s\in\R,$
$$|p_{\lambda }({\tilde \gamma}(t),{\tilde \gamma}(s))|\lesssim   \lambda^{1/2}|t-s|^{-\frac{1}{2}}.$$
By the Hardy-Littlewood fractional integral theorem, we obtain
\begin{align}\label{hardylittlewood}
    \left(\int^{-\infty}_\infty\left|\int^{-\infty}_\infty|p_{\lambda }({\tilde \gamma}(t),{\tilde \gamma}(s))h({\tilde \gamma}(s))|ds\right|^4dt\right)^{\frac{1}{4}}\lesssim  \lambda^{1/2}||h||_{L^{4/3}({\tilde \gamma})}.
\end{align}
Thus, $$||R_{\tilde \gamma}\mathbf{P}_{\lambda }(R_{\tilde \gamma})^*||_{L^{4/3}({\tilde \gamma})\to L^4({\tilde \gamma})}\lesssim  \lambda^{1/2}.$$

By Young's inequality, for any $q\neq 4,$ \begin{align}
    \int_{\la}^{\la+\eta} \left(\int_{\tilde \gamma}|{p}_\kappa(x,y)|^{q/2}dy\right)^{2/q}d\kappa\lesssim\la^{2\mu(q)}\eta.
\end{align}
Thus, \begin{align}
    ||R_{\tilde \gamma}\mathbf{1}_{[\lambda,\lambda+\eta]}(\tilde P)R_{\tilde \gamma}^*||_{L^{4/3}({\tilde \gamma})\to L^{4}({\tilde \gamma})}\lesssim \la^{2\mu(4)}\eta.
\end{align}
\end{proof}

\subsection{Critical $\log$-scaled estimates} 
Notice that Theorem \ref{theorem1} only implies the sharp $\log$-scaled spectral projection estimates from $L^2(M)$, where $M$ is a 2-dimensional manifold with bounded geometry and negative curvature, to the $L^q$ spaces of its uniformly embedded curves for $q>4$.
We now prove the $\log$-scaled estimate from $L^2(M)$ to $L^q$ spaces of its compact geodesic segments for $2\leq q\leq 4$.
We shall see later that, in order to prove Theorem \ref{nb}, it suffices to consider compact geodesic segments, since $M_{tr}$ is compact.
\begin{proposition}\label{proposition3.4}
Let $M$ be a Riemannian surface with curvature pinched below 0 and bounded geometry. Let $\gamma$ be a fixed compact geodesic segment on $M$. Then for $\la\geq 1$ and $ q\geq 2$,
\begin{align}
    ||R_{\gamma}\mathbf{1}_{[\la,\la+\log\la^{-1}]}(P)||_{L^2(M)\to L^q(\gamma)}\lesssim \la^{1/4}\log\la^{-1/2}.
\end{align}
\end{proposition}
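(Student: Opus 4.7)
Since $\gamma$ is a compact (hence uniformly embedded) $1$-dimensional submanifold of $M$, Theorem \ref{theorem1} already supplies the estimate for $q>4$ with the stronger bound $\la^{\mu(q)}(\log\la)^{-1/2}$. The content of Proposition \ref{proposition3.4} is therefore the range $q\in[2,4]$, where $\mu(q)=1/4$. My plan is to establish the endpoint $q=2$ by a $TT^*$ analysis, handle the endpoint $q=4$ by Blair's Kakeya--Nikodym-type argument, and fill the intermediate range by Riesz--Thorin interpolation.

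Following Section~2.3, the $q=2$ endpoint reduces by $TT^*$ to
$$\|R_\gamma \rho_\la \rho_\la^* R_\gamma^*\|_{L^2(\gamma)\to L^2(\gamma)} \lesssim \la^{1/2}(\log\la)^{-1},$$
with $T=c^*\log\la$ as in \eqref{rhola}. I split $\rho_\la\rho_\la^*=L_\la+G_\la$ exactly as in \eqref{lla}--\eqref{gla}. The local piece $L_\la$ is essentially $T^{-1}\Xi(\la-P)$ for a Schwartz multiplier $\Xi$, so Proposition \ref{unitband} at $(k,n,q)=(1,2,2)$ combined with almost orthogonality yields $\|R_\gamma L_\la R_\gamma^*\|_{L^2\to L^2}\lesssim T^{-1}\la^{1/2}$, which already matches the target.

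The global part $G_\la$ is where Blair's method \cite{Mdblair} enters. Because $\gamma$ is compact and $M$ has uniformly bounded geometry with curvature pinched below zero, a tubular neighborhood $U\supset\gamma$ is precompact, and by finite propagation speed, with $c^*$ sufficiently small, the kernel of $G_\la$ on $\gamma\times\gamma$ depends only on the metric of $M$ inside a $c^*\log\la$-thickening of $U$. A cutoff-and-extension of the metric produces a compact surface $M'$ of pinched negative curvature agreeing with $M$ on this thickening; finite propagation speed then gives $\cos(tP)(x,y)=\cos(tP_{M'})(x,y)$ for $x,y\in\gamma$ and $|t|\leq c^*\log\la$. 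Blair's wave-kernel bound on (the universal cover of) $M'$, which exploits Jacobi-field dispersion under pinched negative curvature, delivers $\|R_\gamma G_\la R_\gamma^*\|_{L^2\to L^2}\lesssim T^{-1}\la^{1/2}$. The corresponding Kakeya--Nikodym analysis on $M'$ handles the $q=4$ endpoint with the same gain on the $L^{4/3}\to L^4$ norm, and Riesz--Thorin with Theorem~\ref{theorem1} closes the intermediate range.

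The principal obstacle is the extension step: producing a compact surface $M'$ of pinched negative curvature and bounded geometry that coincides with $M$ on a $\log\la$-sized neighborhood of $\gamma$, with all relevant constants independent of $\la$. The curvature-pinch and bounded-geometry hypotheses are tailored to make this cutoff-and-extension viable, but care is required to verify that Blair's $(\log\la)^{-1/2}$ gain at $q=2$, and the analogous gain at $q=4$, both survive the transplant with constants depending only on the ambient uniform geometry rather than on the possibly noncompact global structure of $M$.
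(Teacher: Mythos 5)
Your reduction to $q\in[2,4]$ and your treatment of the local term $L_\la$ match the paper, but the handling of the global term $G_\la$ has a genuine gap, and it is exactly the step you flag as the ``principal obstacle.'' The cutoff-and-extension to a compact surface $M'$ of pinched negative curvature agreeing with $M$ on a $c^*\log\la$-thickening of a tube around $\gamma$ is not available with constants uniform in $\la$: such a thickening has area growing polynomially in $\la$ (exponential volume growth at scale $\log\la$), so by Gauss--Bonnet any closed negatively curved extension must have genus and diameter growing with $\la$, and the implicit constants in Blair's theorem, which depend on the compact manifold, would then depend on $\la$. The detour is also unnecessary: since $M$ is complete with nonpositive curvature and bounded geometry, Cartan--Hadamard gives a universal cover $(M',g')$ diffeomorphic to $\R^2$ with no conjugate points and uniform geometry, and this is what the paper uses. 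It lifts $\gamma$, writes $G_\la=\sum_{\alpha\in\Gamma}G_\la^\alpha$ over deck transformations, and splits into a ``tube'' part ($\alpha$ meeting a geodesic tube about the lift) and an ``oscillatory'' part. The tube part is controlled by the Hadamard/B\'erard parametrix as in \cite{CS}, with the coefficient $\omega(x,\alpha y)\lesssim e^{-c\,d_{g'}(x,\alpha y)}$ coming from the G\"unther volume comparison (this is where strict pinching below $0$ enters), a finite-propagation count of $O(T)$ contributing terms, and Hardy--Littlewood fractional integration on the geodesic; the oscillatory part uses Blair's Section 2.1 bound $\la^{1/4}e^{CT}$ on $L^2$, interpolated with the sup bound \eqref{logLinfty}, with $T=\beta\log\la$ and $\beta$ small.

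A second, related problem is quantitative: even granting your transplant, Blair's theorem for nonpositive curvature gives only $\la^{1/4}(\log\la)^{-1/4}$ at $q=4$, not the $(\log\la)^{-1/2}$ gain claimed here, and your asserted bound $\|R_\gamma G_\la R_\gamma^*\|_{L^2\to L^2}\lesssim T^{-1}\la^{1/2}$ does not follow from the cited result; the full gain requires the exponential decay supplied by strict negative curvature in the tube-term analysis sketched above. (Also, your $q=2$ endpoint plus interpolation is more than needed: once the $L^{4/3}(\gamma)\to L^4(\gamma)$ bound $\la^{1/2}(\log\la)^{-1}$ is in hand, compactness of $\gamma$ and H\"older give all $2\le q\le 4$; and note that for $q>4$ Theorem \ref{theorem1} gives $\la^{\mu(q)}(\log\la)^{-1/2}$ with $\mu(q)>1/4$, which is a weaker, not stronger, bound than $\la^{1/4}(\log\la)^{-1/2}$.)
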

\begin{proof}
Recall $T\sim\log\la$ and $\rho_\la$ as defined in \eqref{rhola}. In addition, \begin{align}
    R_{\gamma}(\rho_\la\rho_\la^*) R^*_{\gamma}=R_{\gamma} L_\lambda R^*_{\gamma}+R_{\gamma} G_\lambda R^*_{\gamma}
\end{align}with $L_\la$ and $G_\la$ as defined in \eqref{gla} and \eqref{lla}.

First, a compact geodesic can be covered by a finite number of uniformly embedded geodesic segments. Therefore, we may construct coverings $\{A_j\}$ of $\gamma$ and $\{B_j\}$ of a neighborhood of $\gamma$ in $M$ as described in Section 1. Then, we may appeal to the local results
 of small time wave kernels as in page 8 of \cite{CS} to obtain
 \begin{align}
     |L_\la(x,y)|\lesssim\la^{1/2}(\log\la)^{-1}|x-y|^{-1/2}
 \end{align}
 for any $x,y\in A_j.$ Therefore, we have
 \begin{align}
    ||R_{\gamma} L_\lambda R^*_{\gamma}f||_{L^4({A_j})}\lesssim \la^{1/2}(\log\la)^{-1}||f||_{L^{4/3}(A_j)},
\end{align} for every $j,$ and thus
\begin{align}
    ||R_{\gamma} L_\lambda R^*_{\gamma}f||_{L^4({\gamma})}\lesssim \la^{1/2}(\log\la)^{-1}||f||_{L^{4/3}(\gamma)}.
\end{align}

We now slightly modify the proof of Theorem 1.1 in \cite{Mdblair} to deal with the $R_{\gamma} G_\lambda R^*_{\gamma}$ term. By the Cartan-Hadamard theorem, $(M,g)$ has a universal cover, $(M',g')$, diffeomorephic to $\R^2$ by $\exp_x$ for any $x\in M.$ We lift $\gamma\in M$ to $\gamma'\in M'$. Let $\pi$ be the covering map $\pi:M'\to M.$ Let $\Gamma$ denote the set of deck transformations of $M'$, such that \begin{align}
    \Gamma:=\{\alpha:M'\to M',\alpha\circ \pi=\pi\}.
\end{align} For $\tau>0,$ define the geodesic tube about $\gamma'$ of radius $\tau,$ \begin{align}
    T_\tau(\gamma')=\{x\in M':d_{g'}(x,\gamma')\leq \tau\}.
\end{align} Let $D$ be a fundamental domain of $\Gamma$ in $\mathbb{H}.$ Define $\Gamma_{T_\tau(\gamma')}\subset \Gamma$ as \begin{align}
    \Gamma_{T_\tau(\gamma')}=\{\alpha\in\Gamma: \alpha(D)\cap T_\tau(\gamma')\neq\emptyset\}.
\end{align}
Let $P'=\sqrt{-\Delta_{g'}}$, where $\Delta_{g'}$ is the Laplacian operator on $M'.$ Define $G_\la^\alpha$ as the operator with kernel \begin{align*}
    K^\alpha(x,y)=\frac{1}{T}\int^T_{-T}(1-\beta(t))\hat{\rho}(t/T)e^{-\lambda\tau}(\cos(t P'))(x,\alpha y)dt.
\end{align*}
We may write
\begin{align}
    G_\la=G^{tube}_\lambda+G^{osc}_\la=\sum_{\alpha\in\Gamma_{T_\tau}(\gamma')}G^\alpha_\la+\sum_{\alpha\notin G_{T_\tau}(\gamma')}G^\alpha_\la.
\end{align}
When $\alpha=Id,$ by (4.14) of \cite{xuehua}, we have  \begin{align}
    K^{Id}(x,y)\lesssim T^{-1}\la^{1/2}d_{g'}(x,y)^{-1/2}.
\end{align}
By Hardy-Littlewood fractional integral theorem and since $\gamma'$ is a geodesic, we have \begin{align}
    \left\|\int K^{Id}(\cdot,y)f(y)dy\right\|_{L^{4}(\gamma')}\lesssim ||f||_{L^{4/3}(\gamma')}.
\end{align}
Now, assume $\alpha\neq Id.$ Following Lemma 3.1 of \cite{CS}, we can write \begin{align}
    K^\alpha(x,y)=\omega(x,\alpha y)\sum_\pm b_\pm(T,\la,d_{g'}(x,\alpha y))e^{\pm i\la d_{g'}(x,\alpha y)}+R(x,\alpha y),
\end{align}
where $R=O(e^{c_0T})$ for some $c_0>0.$ Since $M'$ has curvature pinched below 0, by the Gunther comparison theorem, the volume of a geodesic ball with radius $r$ in $M$ is larger than $e^{cr}$ for some $c>0.$ Thus, by the line above (2.3.7) of \cite{Xi2017KakeyaNikodymPA}, we have \begin{align}
    |\omega(x,\alpha y)|\lesssim e^{-cd_{g'}(x,\alpha y)},
\end{align}for some $c>0.$
Meanwhile, by (3.15) of \cite{CS}, \begin{align}
    |b_\pm(T,\la,d_{g'}(x,\alpha y))|\lesssim T^{-1}\la^{1/2}d_{g'}(x,\alpha y)^{-1/2}.
\end{align}

By finite speed of propagation, $K^\alpha(x,y)$ vanishes if $d_{g'}(x,\alpha y)>T.$
So, for any fixed $x,y$ and $\tau$,
\begin{align}
    \#\{\alpha\in\Gamma: K(x,\alpha y)\neq 0\text{ and }\alpha\in\Gamma_{T_\tau}(\gamma')\}=O(T).
\end{align}
Thus,\begin{align}
    |\sum_{\alpha\in\Gamma_{T_\tau}(\gamma'),\alpha\neq Id}K^\alpha(x,y)|\lesssim\sum_{0\leq 2^j\leq T}T^{-1}\la^{1/2}e^{-c2^j}2^j2^{-j/2}\lesssim T^{-1}\la^{1/2}.
\end{align}Since $\gamma$ is compact, this implies \begin{align}
    ||R_{\gamma} G^{tube}_\lambda R^*_{\gamma}f||_{L^4({\gamma})}\lesssim \la^{1/2}(\log\la)^{-1}||f||_{L^{4/3}(\gamma)}.
\end{align}

Finally, by Section 2.1 of \cite{Mdblair}, for some $C>0,$
\begin{align}
    ||R_{\gamma} G^{osc}_\lambda R^*_{\gamma}f||_{L^2({\gamma})}\lesssim \la^{1/4}e^{CT}||f||_{L^2(\gamma)}.
\end{align}
By choosing $T=\beta\log\la$ with $\beta>0$ small enough and interpolating with \eqref{logLinfty}, we have \begin{align}
    ||R_{\gamma} G^{osc}_\lambda R^*_{\gamma}f||_{L^4({\gamma})}\lesssim \la^{1/2-\epsilon}||f||_{L^{4/3}(\gamma)},
\end{align}
for some $\epsilon>0.$ Thus, for $f\in L^2(M)$, \begin{align}
    ||R_{\gamma} \mathbf{1}_{[\la,\lambda+(\log\la)^{-1}]}(P)f||_{L^4({\gamma})}\lesssim \la^{1/4}(\log\la)^{-1/2}||f||_{L^{2}(M)}.
\end{align}

Since $\gamma$ is compact, we also have 
\begin{align}
    ||R_{\gamma} \mathbf{1}_{[\la,\lambda+(\log\la)^{-1}]}(P)f||_{L^q({\gamma})}\lesssim \la^{1/4}(\log\la)^{-1/2}||f||_{L^{2}(M)}
\end{align}
for all $q\leq4$ by H\"older's inequality.
\end{proof}

\subsection{Proof of Theorem \ref{nb}}
Recall the construction of $M_{tr}$ and $M_\infty$ in \eqref{decomp}.  Let $\psi_{{tr}}$ be a smooth function, which is compactly supported in a neighborhood of $M_{tr},$ and define $\psi_\infty\in C^\infty(M)$ such that \begin{align}\label{trinfty}
    \psi_\infty:=1-\psi_{tr}.
\end{align}

Let $\gamma$ be a nontrapped geodesic in $M.$ Since $\gamma$ is not trapped, $\gamma\cap \text{supp}(\psi_{{tr}})$ is compact. Therefore, for $\lambda\geq1$ and $q\geq 2$, by Proposition \ref{proposition3.4}, \begin{align}
    ||R_{\gamma} \psi_{tr}\mathbf{1}_{[\lambda,\lambda+(\log\lambda)^{-1}]}(P)||_{L^2(M)\to L^q({\gamma})}\lesssim \lambda^{\mu(q)}(\log\lambda)^{-1/2}.
\end{align}
 
For $0<\eta<1,$ where $N\in\mathbb{N}$, we aim to show for $f\in L^2(M),$
\begin{align}\label{lossless M}
||R_\gamma\mathbf{1}_{[\lambda,\lambda+\eta]}(P)f||_{L^q(\gamma)}\lesssim
    \lambda^{\mu(q)}\eta^{1/2}||f||_{L^2(M)}.
\end{align}
Define $\beta\in C^\infty((1/2,2))$ and $\beta=1$ on $(3/4,5/4)$. By the definition of $\mathbf{1}_{[\lambda,\lambda+\eta]}(P)$, we may assume, without loss of generality, in the rest of this subsection $f=\beta(P/\lambda)f$.
Let $\rho$ be defined as in the last section.
Showing \eqref{lossless M} is equivalent to showing
\begin{align}
||R_\gamma\rho\left(\frac{\Delta -\lambda^2}{\lambda\eta}\right) ||_{L^2(M)\to L^q(\gamma)}\lesssim
    \lambda^{\mu(q)}\eta^{1/2}.
\end{align}
We aim to show
\begin{align}\label{spectraltrapped}
    \|R_\gamma\psi_{tr}\rho(({\Delta -\lambda^2})({\lambda\eta})^{-1})||_{L^2(M)\to L^q(\gamma)}\lesssim
    \lambda^{\mu(q)}\eta^{1/2},
\end{align}
 and 
 \begin{align}\label{specfunnel}
    \|R_\gamma\psi_\infty\rho(({\Delta -\lambda^2})({\lambda\eta})^{-1})||_{L^2(M)\to L^q(\gamma)}\lesssim
    \lambda^{\mu(q)}\eta^{1/2}.
\end{align}

We follow the argument in \cite{hstz} to simplify the problem to proving a resolvent kernel estimate.
Let $u=e^{-it\Delta }f$. To prove \eqref{specfunnel}, we let $v:=\psi_\infty u$.
Then, $v$ solves the Cauchy problem
\begin{equation}\label{5}
\begin{cases}
(i\partial_t-\Delta )v=[\psi_\infty, \Delta ]u
\\
v|_{t=0}=\psi_\infty f.
\end{cases}
\end{equation}
Since $\Delta =\tilde \Delta $ on $\text{supp }\psi_\infty$, $v$ also solves the following Cauchy problem
on ${\tilde M}$.
\begin{equation}\label{6}
\begin{cases}
(i\partial_t-\tilde \Delta )v=[\psi_\infty, \tilde \Delta ]u
\\
v|_{t=0}=\psi_\infty f.
\end{cases}
\end{equation}
Thus,
\begin{equation}\label{7}
v=e^{-it\tilde \Delta }(\psi_\infty f)+i\int_0^t e^{-i(t-s)\tilde \Delta }\bigl(
[\Delta ,\psi_\infty] u(s, \, \cdot \, )\bigr) \, ds. 
\end{equation}
By using inverse Fourier transform, \eqref{7} implies
\begin{multline}\label{2.3}
\psi_\infty \rho((\la \eta)^{-1}(-\Delta -\la^2))f = \rho((\la \eta)^{-1}(-\tilde \Delta -\la^2))(\psi_\infty f)
\\
+(2\pi)^{-1} i
\int_{-\infty}^\infty \la\eta \, \Hat \rho\bigl( \la\eta t\bigr) \, e^{-it\la^2}
\, 
\Bigl(\int_0^t e^{-i(t-s)\tilde \Delta }\bigl([\Delta ,\psi_\infty]u(s,\, \cdot\,)) \, ds\Bigr) \, dt.
\end{multline}

Let $\tilde \gamma $ be a geodesic in $\tilde M$, such that $\tilde \gamma$ agrees with a connected component of $\gamma$ in $M_\infty$. We may assume without loss of generality that $\gamma\cap M_\infty$ has at most two connected components. By the lossless spectral projection estimates from $L^2({\tilde M})$ to $L^q(\tilde \gamma) $, for $\kappa\approx \la$ and $f\in L^2(\tilde M),$ we have
\begin{equation}\label{3.7}
\|R_{\tilde \gamma }\mathbf{1}_{[\kappa,\kappa+\eta]}(\tilde P)f\|_{L^q(\tilde \gamma )}\lesssim \la^{\mu(q)}\eta^{\frac12}\|f\|_{L^2({\tilde M})}.
\end{equation}
Thus, we have the desired bounds for the first term on the right side of \eqref{2.3}. 

Now, we estimate the second term on the right side of \eqref{2.3}. Set $\tilde \rho(t)=e^{- t} \hat{\rho}(t)$. Then, integrating by parts in $t$ yields
\begin{align}\label{3.4.2}
Q_\la f&= \la\eta\int_{-\infty}^\infty
e^{-it(\tilde \Delta +\la^2+\la\eta i)} \tilde \rho(\la\eta t) \Bigl(\int_0^t
e^{is\tilde \Delta } [\Delta ,\psi_\infty] \bigl(e^{-is\Delta } f \bigr)\, ds\, \Bigr) \, dt
\\
&= -i(\tilde \Delta +\la^2+\la \eta i)^{-1} \la\eta\int_{-\infty}^\infty
e^{-it(\tilde \Delta +\la^2+\la\eta i)} \frac{d}{dt}\tilde \rho(\la\eta t) \Bigl(\int_0^t
e^{is\tilde \Delta } [\Delta ,\psi_\infty] \bigl(e^{-is\Delta } f \bigr)\, ds\, \Bigr) \, dt
\notag
\\
&\quad -i(\tilde \Delta +\la^2+\la\eta i)^{-1} \la\eta \int_{-\infty}^\infty[\Delta ,\psi_\infty] \hat\rho(\la\eta t)e^{-it\la^2} e^{-it\Delta }f \, dt
\notag
\\
&= -i(\tilde \Delta +\la^2+\la\eta i)^{-1}  \bigl[ Q'_\la f+S_\la f\bigr],
\notag
\end{align}
where $Q'_\la$ is the analog of $Q_\la$ with $\tilde \rho(\lambda\eta t)$ replaced by its derivative, and where $S_\la$ is the last integral. To prove \eqref{specfunnel} it suffices to show that
\begin{equation}\label{3.4.1}
\|R_{\tilde \gamma }Q_\la f\|_{L^q(\tilde \gamma )}\lesssim \la^{\mu(q)}\eta^{1/2}\|f\|_{L^2({\tilde M})}.
\end{equation}

By (2.38) of \cite{hstz} and \eqref{3.7}, for $2<q<\infty$,
\begin{equation}\label{3.8}
\| R_{\tilde \gamma }(\tilde \Delta +\la^2+\la\eta i)^{-1} Q'_\la f\|_{L^q(\tilde \gamma )} \lesssim \la^{\mu(q)-1}\eta^{-\frac12}\|Q'_\la f\|_{L^2({\tilde M})}\lesssim\lambda^{\mu(q)}\eta^{1/2}||f||_{L^2(M)}.
\end{equation}
Thus, $Q_\lambda'$ in \eqref{3.4.2} satisfies the bounds in 
\eqref{3.4.1}.

To handle $S_\la$, we require the following result analogous to Proposition 2.3 of \cite{hstz}.
\begin{proposition}\label{keya}
    Let ${\tilde M}$ be a simply connected asymptotically hyperbolic surface with negative curvature and $\tilde \gamma $ be a geodesic in ${\tilde M}$. Let $\lambda\geq1 $ and $\eta\in(0,\frac12).$ If $\psi \in C^\infty_0 ({\tilde M})$ is supported in $M_\infty$, then, for $2 < q < \infty$,
 \begin{align}
     ||R_{\tilde \gamma }(\tilde \Delta  +\lambda^2 +i\eta\la)^{-1}\psi ||_{L^2({\tilde M})\to L^q(\tilde \gamma )} \lesssim \lambda^{\mu(q)-1}.
 \end{align}

\end{proposition}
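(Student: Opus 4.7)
The plan is to follow the approach of Proposition 2.3 of \cite{hstz}: express the resolvent through the spectral measure of $\tilde P=\sqrt{-\tilde\Delta}$, separate the spectral variable into an on-shell and an off-shell part, and apply Proposition \ref{asymptot} together with the Chen--Hassell kernel bound \eqref{kirakirakernelest}, exploiting crucially the compact support of $\psi$ in $M_\infty$. First I would use the spectral theorem to write
$$(\tilde \Delta+\lambda^2+i\eta\lambda)^{-1}=-\int_0^\infty \frac{\mathbf{P}_\kappa}{\kappa^2-\lambda^2-i\eta\lambda}\,d\kappa,$$
and split the integral dyadically via a smooth partition $1=\chi_0(\kappa-\lambda)+\sum_{j\ge 1}\chi_j(\kappa-\lambda)$, where $\chi_0$ is supported on $|\kappa-\lambda|\lesssim 1$ and $\chi_j$ on $|\kappa-\lambda|\sim 2^j$.

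For each dyadic piece I would refine further into spectral windows of width $\eta$ centered at $\lambda+k\eta$, and invoke Proposition \ref{asymptot} on each window. On the off-shell pieces ($j\ge 1$) the denominator satisfies $|\kappa^2-\lambda^2-i\eta\lambda|\gtrsim \lambda\cdot 2^j$, so an almost-orthogonality ($TT^*$) sum over the windows produces a contribution of size $\lambda^{\mu(q)-1}2^{-j}$, which is summable in $j$ to the desired $\lambda^{\mu(q)-1}$. The compact support of $\psi$ is already useful here, to keep the various spatial integrals (pairing $p_\kappa(x,y)$ against $\psi(y)$) uniformly bounded using the exponential decay in the second case of \eqref{kirakirakernelest}.

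For the on-shell piece ($j=0$) the denominator is only of size $\eta\lambda$, and the naive route via Proposition \ref{asymptot} produces the weaker bound $\lambda^{\mu(q)-1}\eta^{-1/2}$. The crucial improvement comes from the compact support of $\psi$: the input $\psi f$ is supported in the compact set $K:=\mathrm{supp}\,\psi\subset M_\infty$, and the bound $|p_\kappa(x,y)|\lesssim \kappa^{1/2}e^{-d_{\tilde g}(x,y)/2}$ for $d_{\tilde g}(x,y)\ge 1$ shows that contributions to the $L^q(\tilde\gamma)$ norm from points $x\in\tilde\gamma$ at large distance from $K$ decay exponentially. This effectively reduces $R_{\tilde\gamma}$ to restriction onto a compact segment of $\tilde\gamma$. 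Combining this localization with a Rellich / limiting-absorption estimate of the form $\|\psi(\tilde\Delta+\lambda^2+i\eta\lambda)^{-1}\psi\|_{L^2\to L^2}\lesssim\lambda^{-1}$ (uniformly in $\eta$) through a sandwich against Proposition \ref{asymptot} recovers the missing factor of $\eta^{1/2}$.

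The main obstacle is exactly this upgrade of the on-shell estimate from $\lambda^{\mu(q)-1}\eta^{-1/2}$ to the sharp $\lambda^{\mu(q)-1}$: extracting the full $\eta^{1/2}$ saving from the compactness of $\mathrm{supp}\,\psi$ requires carefully combining the exponential decay of the Chen--Hassell spectral measure at infinity on the asymptotically hyperbolic end with the sharp spectral projection estimate, and is the technical heart of the argument, precisely analogous to Proposition 2.3 of \cite{hstz}. Once this on-shell bound is in hand, the off-shell dyadic summation is routine.
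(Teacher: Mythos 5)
Your off-shell treatment is essentially the paper's: the contribution of $(I-\mathbf{1}_{[\la/2,2\la]}(\tilde P))$ is handled there exactly as you suggest, by summing unit-band projection bounds against the decay of the resolvent symbol (see \eqref{betacut}), and that part of your plan is sound. The problem is the near-shell regime, which you yourself defer as ``the technical heart'': as written, your mechanism does not produce the uniform bound $\la^{\mu(q)-1}$. If you tile $|\kappa-\la|\lesssim 1$ by windows of width $\eta$ at distance $k\eta$ from $\la$, the resolvent denominator only decays like $(\la\eta(1+|k|))^{-1}$. Using the limiting-absorption input you invoke (which is Lemma 2.5 of \cite{hstz}, i.e.\ \eqref{3.17}, giving $\|\mathbf{1}_{[\kappa,\kappa+\eta]}(\tilde P)\psi h\|_{L^2}\lesssim \eta^{1/2}\|h\|_{L^2}$) window by window, the sum over $|k|\lesssim \eta^{-1}$ of $(1+|k|)^{-1}$ costs a factor $\log(1/\eta)$; using almost orthogonality (Cauchy--Schwarz across the windows) instead costs $\eta^{-1/2}$. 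Neither route, nor any interpolation between them, closes the gap, because a single (outgoing) resolvent simply does not have summable decay away from the spectral shell. The ``sandwich against Proposition \ref{asymptot}'' that is supposed to recover the missing $\eta^{1/2}$ is never specified, and the most natural implementation of it fails by exactly this logarithm. Your auxiliary claim that the exponential decay in \eqref{kirakirakernelest} reduces matters to a compact segment of $\tilde\gamma$ is also not justified for the resolvent: in the paper the spatial localization is only to a segment of length $\sim\log\la$, obtained from finite propagation speed up to times $c_0\log\la$, and the resulting $\la^{Cc_0}$ volume losses must be beaten by taking $c_0$ small.

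The missing idea is the directional (outgoing/incoming) structure that the paper imports from \cite{hstz}. The actual proof writes the outgoing resolvent as a time integral, decomposes dyadically in time into operators $T_j$ with symbol $O(\la^{-1}2^j(1+2^j|\tau-\la|)^{-N})$, proves the uniform per-scale bound \eqref{tjblarge} using \eqref{3.7} and \eqref{3.17}, and then must still sum over the $\sim\log\log\la$ scales $C_0\le 2^j\le c_0\log\la$. This is done by inserting microlocal cutoffs $A_{k,0}$, $A_{k,1}$ adapted to the direction of geodesics emanating from $\mathrm{supp}\,\psi$: the ``wrong-direction'' part $A_{k,1}T_j$ is $O(\la^{-N})$ by nonstationary phase ((2.87) of \cite{hstz}, our \eqref{n13}); the ``right-direction'' part is resummed into the full resolvent and split as in \eqref{n17}--\eqref{n18}, where the difference of outgoing and incoming resolvents (essentially the spectral measure) has window decay $(1+|m|)^{-2}$, which \emph{is} summable without loss, while the incoming resolvent against the outgoing cutoff is again negligible by (2.102) of \cite{hstz}. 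This outgoing-versus-incoming cancellation is precisely what replaces the non-summable $(1+|k|)^{-1}$ tail in your scheme, and it is absent from your proposal; without it (or an equivalent propagation/non-trapping argument) the sketch does not yield the proposition.
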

\begin{proof}
First, notice that by the lossless spectral projection estimates \eqref{3.7},
\begin{align}\label{betacut}
    \begin{split}||R_{\tilde \gamma }(I-\mathbf{1}_{[\la/2,2\lambda]}&(\tilde P))(\tilde \Delta +(\lambda+i\eta)^2)^{-1} \psi||_{L^2({\tilde M})\to L^q(\tilde \gamma )}\\
    &\lesssim||R_{\tilde \gamma }\sum_{j<\lambda/2}\mathbf{1}_{[j,j+1]}(\tilde P)(\tilde \Delta +(\lambda+i\eta)^2)^{-1}\psi||_{L^2({\tilde M})\to L^q(\tilde \gamma )}\\
    &+||R_{\tilde \gamma }\sum_{j>2\lambda}\mathbf{1}_{[j,j+1]}(\tilde P)(\tilde \Delta +(\lambda+i\eta)^2)^{-1}\psi||_{L^2({\tilde M})\to L^q(\tilde \gamma )}\\
    &\lesssim \sum_{j<\lambda/2}j^{\mu(q)}\la^{-2}+\sum_{j>2\lambda}j^{\mu(q)-2}\lesssim\lambda^{\mu(q)-1}.
    \end{split}
\end{align}
    Therefore, it suffices to consider 
    \begin{align}
        R_{\tilde \gamma }\mathbf{1}_{[\la/2,2\lambda]}(\tilde P)(\tilde \Delta +(\lambda+i\eta)^2)^{-1}\psi.
    \end{align}
    Notice that
    \begin{align}
        (\tilde \Delta +(\lambda+i\eta)^2)^{-1}=\frac{1}{i(\lambda+i\eta)}\int^\infty_0e^{it\lambda-t/\log\lambda}\cos(t\tilde P)dt.
    \end{align}

Fix $\beta\in C_0^\infty (1/2, 2)$ satisfying $\sum_{j=-\infty}^\infty \beta(s/2^j)=1$, and 
define 
\begin{equation}\label{tj}
T_j h=\frac1{i(\la+i\eta)}\int_0^\infty \beta(2^{-j}t)e^{it\la-t\eta}\cos (t\tilde P)h\,dt.
\end{equation}
Then, it suffices to obtain the desired bounds for the $T_j$ operators.  Note that $T_j$ satisfies 
\begin{equation}\label{symbol}
    T_j(\tau)=\frac1{i(\la+i\eta)}\int_0^\infty \beta(2^{-j}t)e^{it\la-t\eta}\cos (t\tau)\,dt=O(\la^{-1}2^j(1+2^j|\tau-\la|)^{-N}).
\end{equation}
We split the proof into four cases.\\

\noindent(i) $2^j\le \la^{-1}$. 

Notice that $\sum_{2^j\leq \la^{-1}}T_j$ satisfies \begin{align}
    \sum_{2^j\leq \la^{-1}}T_j(\tau)=O(\la^{-1}(\la+|\tau|)^{-1}).
\end{align}
Therefore,
\begin{align*}
    ||R_{\tilde \gamma }\mathbf{1}_{[\lambda/2,2\lambda]}(\tilde P)\sum_{2^j\leq \la^{-1}}T_j(\psi h)||_{L^q(\tilde \gamma )}&\lesssim\lambda^{\mu(q)+1/2} ||\mathbf{1}_{[\lambda/2,2\lambda]}(\tilde P)\sum_{2^j\leq \la^{-1}}T_j(\psi h)||_{L^2({\tilde M})}\\
    &\lesssim \lambda^{\mu(q)+1/2} (\la^{-2})||h||_{L^2({\tilde M})}\\
    &\lesssim \lambda^{\mu(q)-1}||h||_{L^2({\tilde M})}.
\end{align*}

\noindent{(ii)}$1/\lambda\leq 2^j\leq C.$

By \eqref{3.7},\begin{align*}
    ||R_{\tilde \gamma }\mathbf{1}_{[\lambda/2,2\lambda]}(\tilde P)T_j(\psi h)||_{L^q({\tilde \gamma })}&\lesssim ||\sum_{|m|<\lambda 2^{j+1}}R_{\tilde \gamma }\mathbf{1}_{[m2^{-j},(m+1)2^{-j}]}(\tilde P)T_j(\psi h)||_{L^q({\tilde \gamma })}\\
    &\lesssim \lambda^{\mu(q)}2^{-j/2} ||\sum_{|m|<\lambda 2^{j+1}}\mathbf{1}_{[m2^{-j},(m+1)2^{-j}]}(\tilde P)T_j(\psi h)||_{L^2({\tilde M})}\\
    &\lesssim \lambda^{\mu(q)-1} 2^{-j/2}\sum_{|m|<\lambda 2^{j+1}}2^j(1+|m|)^{-N}||h||_{L^2({\tilde M})}\\
    &\lesssim \lambda^{\mu(q)-1} 2^{-j/2}2^j||h||_{L^2({\tilde M})}\\
    &=\lambda^{\mu(q)-1}2^{j/2}||h||_{L^2({\tilde M})}.
\end{align*}

\noindent(iii) $2^j\ge C\log\la$.

Recall \eqref{symbol}, and the spectral projection estimate on $\tilde M$, Proposition \ref{asymptot}, for all $q\geq 2$, 
\begin{equation}\label{a1}
    \|R_{\tilde \gamma }\mathbf{1}_{[\la/2,2\la]}(\tilde P)T_j (\psi h)\|_{L^{q}({\tilde \gamma })}\lesssim \la^{\mu(q)-1}2^j \|\psi h\|_{L^2({\tilde M})}\lesssim \la^{\mu(q)-1}2^j \|h\|_{L^2({\tilde M})}.
\end{equation}
By Lemma 2.8 of \cite{hstz}, we also have for every $N\in\mathbb{N},$
\begin{equation}\label{a2}
    \|R_{\tilde \gamma }\mathbf{1}_{[\la/2,2\la]}(\tilde P)T_j (\psi h)\|_{L^\infty({\tilde \gamma })}\lesssim \la^{-1/2}2^{-Nj}\|h\|_{L^2({\tilde M})}.
\end{equation}
By \eqref{a1}, \eqref{a2} and interpolation, we have for any $q>2,$
\begin{equation}\label{1a}
    \|R_{\tilde \gamma }\mathbf{1}_{[\la/2,2\la]}(\tilde P)T_j (\psi h)\|_{L^q({\tilde \gamma })}\lesssim \la^{-3/4}2^{-j} \|h\|_{L^2({\tilde M})}.
\end{equation}
 Thus, if we choose $C$ large enough (which may depend on $q$), we have $$\| \sum_{2^j\ge C\log \la } R_{\tilde \gamma }\mathbf{1}_{[\la/2,2\la]}(\tilde P)T_j\psi\|_{L^2({\tilde M})\to L^q(\tilde \gamma )}=O(\la^{\mu(q)-1}).$$

\noindent(iv) $1\le 2^j\le C\log\la$.

To handle the contribution of these terms, we shall first prove that for each fixed $j$ with $2^j\ge1$, we have the uniform bounds
\begin{equation}\label{tjblarge}
   \| R_{\tilde \gamma }\mathbf{1}_{[\la/2,2\la]}T_j\psi  h\|_{L^{q}(\tilde \gamma )} \lesssim \la^{\mu(q)-1}  \|h\|_{L^2({{\tilde M}})}.
\end{equation}
To see this, let us define
\begin{equation}\label{ej}
    E_{\la,j,k}=\mathbf{1}_{[\la+2^{-j}k, \la +(k+1)2^{-j})}( \tilde P).
 \end{equation} 
 By Lemma 2.5 of \cite{hstz}, we have 
 \begin{align}\label{3.17}
     ||\mathbf{1}_{[\lambda,\lambda+\eta]}(\tilde P)\psi h||_{L^2({\tilde M})}\lesssim\eta^{1/2}||h||_{L^2({\tilde M})}
 \end{align}
 By using \eqref{3.7} and \eqref{3.17}  for $\eta=2^{-j}$,
we have 
\begin{align}\label{cylininterp}
\begin{split}
\|R_{\tilde \gamma }\mathbf{1}_{[\la/2,2\la]}( \tilde P) \, & T_j\psi  h\|_{L^{q}(\tilde \gamma )}
\\
&\le \sum_{|k|\lesssim \la 2^{j}}
\|R_{\tilde \gamma }\mathbf{1}_{[\la/2,2\la]}( \tilde P) \, E_{\la,j,k} T_j\psi  h\|_{L^{q}(\tilde \gamma )}
\\
&\le  \la^{\mu(q)}2^{-j/2}  \sum_{|k|\lesssim \la2^j}
\|\mathbf{1}_{[\la/2,2\la]}( \tilde P) E_{\la,j,k} T_j\psi  h\|_{L^2( {\tilde M})}
\\
&\lesssim \la^{\mu(q)}2^{-j/2}\sum_{|k|\lesssim \la2^j} (1+|k|)^{-N} \la^{-1}2^j  \|\mathbf{1}_{[\la/2,2\la]}( \tilde P) E_{\la,j,k} \psi h\|_{L^2({\tilde M})}
\\
&\lesssim \la^{\mu(q)-1}  \|h\|_{L^2({\tilde M})},
\end{split}
\end{align}

 Thus, the proof of \eqref{tjblarge} is complete, and it suffices to consider the values of $j$ such that $C_0\le 2^j\le c_0\log\la$, where $C_0$ is sufficiently large and $c_0$ is sufficiently small. We shall specify later the choices of $C_0$ and $c_0$. Furthermore,  $|T_j(x,y)|=O(\la^{-N})$ if $d_{\tilde g}(x,y)\notin [2^{j-2}, 2^{j+2}]$. We may assume that $\tilde \psi$ is supported in a small neighborhood of some point $y_0$. Then, \begin{align}
    \left\|R_{\tilde \gamma \cap\left\{x\in {\tilde M}:d_{\tilde g}(x,y_0) \notin \left[\frac {C_0}{4}, 4c_0\log\la\right]\right\}}T_j\psi\right\|_{L^2({\tilde M})\to L^\infty(\tilde \gamma )}\lesssim\left\|R_{\tilde \gamma \cap\left\{x\in {\tilde M}:d_{\tilde g}(x,y_0) \notin \left[\frac {C_0}{4}, 4c_0\log\la\right]\right\}}T_j\psi\right\|_{L^1({\tilde M})\to L^\infty(\tilde \gamma )}\lesssim\la^{-N}.
\end{align} By interpolation with \eqref{cylininterp}, for any $q>2$, we have \begin{align}
    \left\|R_{\tilde \gamma \cap\left\{x\in {\tilde M}:d_{\tilde g}(x,y_0) \notin \left[\frac {C_0}{4}, 4c_0\log\la\right]\right\}}T_j\psi\right\|_{L^2({\tilde M})\to L^q(\tilde \gamma )}\lesssim\la^{-N}.
\end{align} Hence, it suffices to show that 
\begin{multline}\label{n2}
\left\|R_{\tilde \gamma }\sum_{\{j:C_0\le 2^j\le c_0\log\la\}} T_j(\tilde \psi h)\right\|_{L^q(S)}\lesssim 
    \la^{\mu(q)-1}\|h\|_{L^2({\tilde M})},
    \\
    \text{where }  \, S=\tilde \gamma \cap\left\{x\in {\tilde M}: \frac {C_0}{4}\le d_{\tilde g}(x,y_0)\le 4c_0\log\la\right\}.
\end{multline}

By \eqref{symbol},
if we fix $\beta \in C^\infty_0((1/4,4))$ with $\beta= 1$ on (1/2,2), it suffices to show 
\begin{equation}\label{n3}
    \left\|R_{\tilde \gamma }\sum_{\{j:C_0\le 2^j\le c_0\log\la\}}\beta({\tilde P}/\la) T_j(\tilde \psi  h)\right\|_{L^q(S)}\lesssim 
    \la^{\mu(q)-1}\|h\|_{L^2({\tilde M})}.
\end{equation}

To prove \eqref{n3}, we need to introduce microlocal cutoffs involving pseudodifferential operators. Since ${\tilde M}$ has bounded geometry, we can cover the set $S$ by a partition of unity $\{\psi_k\}$, which satisfies
\begin{equation}\label{n4}
1=\sum_k \psi_k(x), \quad \text{supp}\,  \psi_k\subset B(x_k,{\delta_0}),
\end{equation}
with $\delta_0>0$ is a small fixed constant and $|\partial_x^j \psi|\lesssim 1$ uniformly in the normal coordinates around $x_k$ for different $k$.  Here  $B(x_k,{\delta_0})$ denotes geodesic balls of radius {$\delta_0$} $\text{with } 
d_{\tilde g}(x_k,x_\ell)\ge {\delta_0} \, \, \text{if } \, k\ne \ell$,
and the balls $B(x_k,2{\delta_0})$ have finite overlap.  Using a volume counting argument, the number of values of $k$ for which $\text{supp}\, \psi_k\cap S\neq \emptyset$ is $O(\la^{Cc_0})$ for some fixed constant $C$. 

If we extend $\beta\in C_0^\infty ((1/4, 4))$ to an even function by letting $\beta(s)=\beta(|s|)$, then we can choose an even function $\rho\in C_0^\infty(\R)$   satisfying $\rho(t)= 1$, $|t|\le \delta_0/4$ and $\rho(t)=0 $, $|t|\ge \delta_0/2$ such that
\begin{equation}\label{n5'}
\begin{aligned}
     \beta(\tilde P/\la)=&(2\pi)^{-1}\int_\R\la\hat \beta(\la t)\cos t\tilde P dt \\
     =&(2\pi)^{-1}\int\rho(t)\la\hat \beta(\la t)\cos t\tilde P dt+ (2\pi)^{-1}\int (1-\rho(t)) \la\hat \beta(\la t)\cos t\tilde P dt.\\
     =& B+R
\end{aligned}
\end{equation}
 The symbol of the operator $R$ is $O((1+|\tau|+\la)^{-N})$. Therefore, by the spectral projection theorem,  we have\begin{align}
     \|R_{\tilde \gamma }R\|_{L^2({\tilde M})\to L^q(\tilde \gamma )}\lesssim_N \la^{-N}.
 \end{align}
On the other hand, by  using the finite propagation speed property of the wave propagator, we may argue as in the compact manifold case to show that $B$ is a pseudodifferential operator with principal symbol $\beta(p(x,\xi))$,
with $p(x,\xi)$ here being the principal symbol of $\tilde P$. 

Choose $\tilde \psi_k\in C^\infty_0(\tilde M)$ with $\tilde \psi_k(y)=1$ for $y\in B(x_k, \tfrac54 \delta_0)$ and
$\tilde \psi_k(y)=0$ for $y\notin B(x_k,\tfrac32 \delta_0)$.  We may also assume that the $\tilde \psi_k$ have bounded derivatives
in the normal coordinates about $x_k$ by taking $\delta_0>0$ small enough, given that ${\tilde M}$ is of bounded geometry.  Then, if $B(x,y)$ is the kernel of $B$, 
we have $\psi_k(x)B(x,y)=\psi_k(x)B(x,y)\tilde \psi_k(y)+O(\la^{-N})$, and so
%
\begin{equation}\label{n5}
\begin{aligned}
    &\psi_k(x)B(x,y)\\
    &\quad=(2\pi)^{-2}\la^2\int e^{i\la\langle x-y,\xi \rangle} \psi_k(x){\beta(p(x,\xi))}\tilde\psi_k(y)d\xi +R_k(x,y)\\
    &\quad= A_k(x,y)+R_k(x,y).
\end{aligned}
\end{equation}
$R_k$ is a lower order pseudodifferential operator which satisfies 
\begin{equation}\label{rk}
    \|R_k\|_{L^2({\tilde M})\to L^\infty({\tilde M})}=O(1).
\end{equation}
Since the $x$-support of $R_k$ is compact and ${\tilde \gamma }$ is uniformly embedded,
we have for any $q,$\begin{equation}\label{rk}
    \|R_{\tilde \gamma }R_k\|_{L^2({\tilde M})\to L^q(\tilde \gamma)}=O(1).
\end{equation}
By \eqref{rk}, the support property of $R_k$, and \eqref{a1},  we have 
\begin{equation}\label{n12a}
\begin{aligned}
       \|&\sum_{\{j:C_0\le 2^j\le c_0\log\la\}}\sum_{k}R_{\tilde \gamma }R_{k} T_j(\tilde{\psi}h)\|_{L^q(S)}\\
       &\lesssim \la^{Cc_0}\|\sum_{\{j:C_0\le 2^j\le c_0\log\la\}} T_j(\tilde{\psi}h)\|_{L^2({\tilde M})}\\
       &\lesssim  \la^{Cc_0}\sum_{\{j:C_0\le 2^j\le c_0\log\la\}}\la^{-1}2^{j}\|\tilde{\psi}h\|_{L^2({\tilde M})}.
\end{aligned}
\end{equation}
Note that $\mu(q)\geq \frac{1}{4}$ for all $q\ge 2$. Therefore,  by choosing $c_0$ sufficiently small, the bound in \eqref{n12a} is better than the estimate in \eqref{n3}.

Moreover, 
$$ A_k(x,y)=0,\,\,\, \text{if } \, x\notin B(x_j,{\delta_0}) \, \,
\text{or } \, \, y\notin B(x_j,3{\delta_0}/2).
$$
For each $x_k$, let $\omega_k$ be the unit covector such that $e^{-tH_p}(x_k, \omega_k)=(y_0, \delta_0)$ for some $\delta_0$ and $t=d_{\tilde g}(x_k, y_0)$,
 with $y_0$ as in \eqref{n2}. 
We define $a_k(x,\xi)\in C^\infty$ such that in the normal coordinate around $x_k$,
\begin{equation}\label{n6}
a_{k}(x,\xi)=0 \, \, 
\text{if } 
\bigl|\tfrac{\xi}{|\xi|_{\tilde g(x)}}-\omega_k\bigr|\ge 2\delta_1,\,\,\,\text{and}\,\,\,a_{k}(x,\xi)=1 \, \, 
\text{if } 
\bigl|\tfrac{\xi}{|\xi|_{\tilde g(x)}}-\omega_k\bigr|\le \delta_1.
\end{equation}
Here $|\xi|_{g(x)}=p(x,\xi)$, and $\delta_1$ is a fixed small constant that will be chosen later. 
By the bounded geometry assumption, we may assume that $\partial^\alpha_x \partial^\gamma_{\xi} a_k=O(1)$ if $p(x,\xi)=1$, independent of $k$, with $\partial_x$ denoting
derivatives in the normal coordinate system about $x_k$.

We finally define the kernel of the microlocal cutoffs $A_{k,0}$ and $A_{k,1}$ as 
\begin{equation}\label{n7}
\begin{aligned}
A_k(x,y)&=A_{k,0}(x,y)+A_{k,1}(x,y)\\
     &=(2\pi)^{-2}\la^2\int e^{i\la\langle x-y,\xi \rangle}\psi_k(x)a_k(x,\xi)\beta((p(x,\xi))\tilde\psi_k(y)d\xi\\
     &\quad+ (2\pi)^{-2}\la^2\int e^{i\la\langle x-y,\xi \rangle}\psi_k(x)(1-a_k(x,\xi))\beta((p(x,\xi))\tilde\psi_k(y)d\xi.
\end{aligned}
\end{equation}
Notice that
\begin{equation}\label{n10}
\| R_{\tilde \gamma }A_{k,\ell}\|_{L^2({\tilde M})\to L^q(\tilde \gamma )}\lesssim\| R_{\tilde \gamma }\|_{L^2({\tilde M})\to L^q(\tilde \gamma )}, \,\, 2\le q\le \infty,\,\,\ell=0,1.
\end{equation}
Note that the support of $ A_{k,\ell}$ are finitely overlapping. Thus, \eqref{n10} implies that
\begin{equation}\label{n11}
\| R_{\tilde \gamma }\sum_{k}A_{k,\ell}h\|_{L^q(\tilde \gamma )}\lesssim \|R_{\tilde \gamma }h\|_{L^q(\tilde \gamma )}, \,\, 2\le p\le \infty,\,\,\ell=0,1.
\end{equation}

By \eqref{n4}, \eqref{n5} and \eqref{n7}, to prove \eqref{n3}, it suffices to show 
\begin{equation}\label{n12}
        \|\sum_{\{j:C_0\le 2^j\le c_0\log\la\}}\sum_{k}R_{\tilde \gamma }A_{k,0} T_j(\tilde{\psi}h)\|_{L^q(S)}\lesssim 
    \la^{\mu(q)-1}\|h\|_{L^2({\tilde M})},
\end{equation}
as well as 
\begin{equation}\label{n13}
        \|\sum_{\{j:C_0\le 2^j\le c_0\log\la\}}\sum_{k}R_{\tilde \gamma }A_{k,1} T_j(\tilde{\psi}h)\|_{L^q(S)}\lesssim_N
    \la^{-N}\|h\|_{L^2({\tilde M})}.
\end{equation}

Now we shall give the proof of \eqref{n13}. It suffices to show 
\begin{equation}\label{n14}
        \| R_{\tilde \gamma }A_{k,1}T_j(\tilde \psi h)\|_{L^q(S)}\lesssim_N \la^{-N}\|h\|_{L^2({\tilde M})}, \, \, C_0\le 2^j\le c_0\log\la.
\end{equation}

Note that $S$ is a uniformly embedded geodesic segment in a ball of radius $O(\la^{Cc_0})$. so the volume of the set $S$ is $O(\la^{Cc_0})$. To prove \eqref{n14}, it suffices to show 
the following pointwise bound
\begin{equation}\label{n15}
\int_0^\infty \beta(2^{-j}t)e^{it\la-t\eta}   ( A_{k,1}\circ \cos (t\tilde P))(x, y)\tilde \psi( y)\,dt \lesssim_N \la^{-N}.
\end{equation}
But \eqref{n15} is (2.87) of \cite{hstz}, so the proof of \eqref{n13} is complete.

Now we give the proof of  \eqref{n12}. By \eqref{n11} and our previous results for the operators $T_j\tilde \psi$ when $2^j\le C_0$ and $2^j\ge c_0\log\la$, proving \eqref{n12} is equivalent to showing that 
\begin{equation}\label{n16}
        \|\sum_{k}R_{\tilde \gamma }A_{k,0}(\tilde \Delta +(\la+i\eta)^2)^{-1} ({\tilde \psi} h)\|_{L^q(S)}\lesssim 
    \la^{\mu(q)-1}\|h\|_{L^2({\tilde M})}.
\end{equation}

To prove \eqref{n16}, it suffices to show 
\begin{equation}\label{n17}
    \|\sum_{k}R_{\tilde \gamma }A_{k,0} (\tilde \Delta +(\la-i\eta)^2)^{-1} ({\tilde \psi} h)\|_{L^q(S)}\lesssim 
    \la^{\mu(q)-1}\|h\|_{L^2({\tilde M})}
\end{equation}
and
\begin{equation}\label{n18}
    \|\sum_{k}R_{\tilde \gamma }A_{k,0} \big((\tilde \Delta +(\la+i\eta)^2)^{-1}-(\tilde \Delta +(\la-i\eta)^2)^{-1} \big) ({\tilde \psi} h)\|_{L^q(S)}\lesssim 
    \la^{\mu(q)-1}\|h\|_{L^2({\tilde M})}.
\end{equation}

Note that if we define
$E_{\la,m}=\mathbf{1}_{[\la+m\eta, \la +(m+1)\eta)}( \tilde P)$, then
the symbol of the operator
$$
E_{\la,m} \big((\tilde \Delta +(\la+i\eta)^2)^{-1}-(\tilde \Delta +(\la-i\eta)^2)^{-1} \big)
$$
is $O\left((\la\eta)^{-1}(1+|m|)^{-2}\right )$. Thus \eqref{n18} can be proved using the same arguments as in the proof of \eqref{tjblarge}.
\begin{align*}
&||R_{\tilde \gamma }\sum_{k}A_{k,0} \beta(\tilde P/\la)\left((\tilde \Delta +\lambda^2+i\lambda\eta)^{-1}-(\tilde \Delta +\lambda^2-i\lambda\eta)^{-1}\right)\Tilde{\psi}h||_{L^q(\tilde \gamma )}\\
    &\lesssim\left\|R_{\tilde \gamma }\psi\sum_{|m|\lesssim\lambda\eta^{-1}}E_{\la,m}\left(\left(\tilde \Delta +\lambda^2+{i\lambda}\eta\right)^{-1}-\left(\tilde \Delta +\lambda^2-{i\lambda}\eta\right)^{-1}\right)\Tilde{\psi}h\right\|_{L^q(\tilde \gamma )}\\
    &\lesssim\left\|R_{\tilde \gamma }\psi \sum_{|m|\lesssim\lambda\eta^{-1}}\left({(1+m^2)\lambda\eta}\right)^{-1}E_{\la,m}\Tilde{\psi}h\right\|_{L^q(\tilde \gamma )}\\
    &\lesssim\lambda^{\mu(q)}\eta^{1/2}\left\|\sum_{|m|<\lambda\eta^{-1}}\left({(1+m^2)\lambda\eta}\right)^{-1}E_{\la,m}(P)\Tilde{\psi}h\right\|_{L^2({\tilde M})}\\
    &\lesssim\lambda^{\mu(q)}\eta^{1/2}\sum_{|m|<\lambda\eta^{-1}}\left({(1+m^2)\lambda\eta}\right)^{-1}\eta^{1/2}||h||_{L^2({\tilde M})}\\
    &\lesssim\lambda^{\mu(q)-1}||h||_{L^2({\tilde M})}.
\end{align*}

To prove \eqref{n17}, note that
\begin{equation}
    \left(\tilde \Delta +(\la-i\eta)^2\right)^{-1}=\frac i{(\la-i\eta)}\int_0^\infty e^{-it\la-t\eta}\cos (t\tilde P)\,dt.
\end{equation}
As in \eqref{tj}, if we define 
\begin{equation}\label{tj1}
\bar T_j h=\frac i{(\la-i\eta)}\int_0^\infty \beta(2^{-j}t)e^{-it\la-t\eta}\cos (t\tilde P)h\,dt,
\end{equation}
then the above arguments implies that the analog of \eqref{n17}, involving the operators $\bar T_j\tilde \psi$ for $2^j\le C_0$ and $2^j\ge c_0\log\la$, satisfies the desired bound. By (2.102) of \cite{hstz} and the fact that $S$ has length $\lesssim\log\la$, we have
\begin{equation}\label{n19}
        \|R_{\tilde \gamma}\sum_{\{j:C_0\le 2^j\le c_0\log\la\}}\sum_{k}A_{k,0} \bar T_j({\tilde \psi} h)\|_{L^q(S)}\lesssim_N 
    \la^{-N}\|h\|_{L^2({\tilde M})},
\end{equation}
which completes the proof.
\end{proof}

Since $S_\la f $ is 
compactly supported in ${\tilde M}$, by Proposition \ref{keya},
\begin{equation}
    \begin{aligned}
        \| R_{\tilde \gamma }(\tilde \Delta +\la^2+i\eta \la)^{-1} S_\la f\|_{L^q({\tilde \gamma })}\lesssim 
    \la^{\mu(q)-1}\|S_\la f\|_{L^2({\tilde M})}.
    \end{aligned}
\end{equation}\\
Now, we estimate $||S_\lambda f||_{L^2({\tilde M})}$. By the sentence below (2.41) of \cite{hstz}, we have
\begin{align}
    ||S_\lambda f||_{L^2({\tilde M})}\lesssim \lambda\eta^{1/2}||f||_{L^2(M)}.
\end{align}
So, $\| R_{\tilde \gamma }(\tilde \Delta +\la^2+i\eta \la)^{-1} S_\la f\|_{L^q({\tilde \gamma })}$ satisfies the desired bound in \eqref{3.4.1}.\\

Now, we aim to obtain \eqref{spectraltrapped}.  Following Section 2 of \cite{hstz}, we choose $\alpha \in C^\infty_0((-1,1))$ that satisfies$\sum_j \alpha(t-j)=1$, for any $t\in {\mathbb R}$.  Then let
$$\alpha_j(t)=\alpha((\la/\log\la)t-j),$$
to obtain a smooth partition of unity associated with $\log\la/\la$-intervals. Let
$$u_j =\alpha_j(t) \psi_{tr} e^{-it\Delta }f.$$
Then, $$(i\partial_t-\Delta )u_j=v_j+w_j,$$
where $$v_j=i\frac{\lambda}{\log\lambda}\alpha'\left(t\frac{\lambda}{\log\lambda}-j\right)\psi_{tr}u$$ and $$w_j=-\alpha\left(t\frac{\lambda}{\log\lambda}-j\right)[\Delta_x,\psi_{tr}]u.$$
Then, if $\rho$ is as above then
$$\Hat \rho(\la\eta t) u_j(t,x)=-i\Hat \rho(\la\eta t) \int^t_0e^{-i(t-s)\Delta }v_j(s,x)ds-i\Hat \rho(\la\eta t) \int_0^te^{-i(t-s)\Delta }w_j(s,x)ds.$$
Let
$$I_j=[(j-1)\la^{-1}\log\la, (j+1)\la^{-1}\log\la],$$
we follow \cite{hstz} to observe
\begin{align*}
\int \la\eta\Hat \rho(\la\eta t)
v_j(t) \, e^{-it\la^2}\, dt =-i(2\pi)^{-1} (\Delta +\la^2+i\la/\log\la)^{-1} \bigl[ R'_{j,v,\la}f  +S_{j,v,\la}f\bigr],
\end{align*}
with
$$R'_{j,v,\la}f = \la\eta\int_{I_j} e^{-it(\Delta +\la^2+i\la/\log\la)}  \frac{d}{dt}\bigl(e^{-t\la/\log\la}\Hat \rho(\la\eta t)\bigr)
\Bigl( \int_0^t \bigl( e^{is\Delta } [\partial_s,\alpha_j ] \, \psi_{tr} e^{-is\Delta }f \bigr) \, ds \, \Bigr) \, dt,
$$
and
$$S_{j,v,\la}f =\la\eta\int_{I_j} e^{-it\la^2} \Hat \rho(\la\eta t) [\partial_t, \alpha_j ] \psi_{tr} e^{-it\Delta }f \, dt.$$

Similarly, set
$$\int \la\eta\Hat \rho(\la\eta t)
w_j(t) \, e^{it\la^2}\, dt
=(2\pi)^{-1} (\Delta +\la^2+i\la/\log\la)^{-1} 
\bigl[ R'_{j,w,\la}f  +S_{j,w,\la}f\bigr],$$
where
\begin{align}
    \begin{split}
       &R'_{j,w,\la}f \\
       &= \la\eta\int_{I_j} e^{-it(\Delta +\la^2+i\la/\log\la)}  \frac{d}{dt}\bigl(e^{-t\la/\log\la}\Hat \rho(\la\eta t)\bigr)
\Bigl( \int_0^t \bigl( e^{is\Delta } \alpha_j(s)[\Delta ,\psi_{tr}] \,  e^{-is\Delta }f \bigr) \, ds \, \Bigr) \, dt,
    \end{split}
\end{align}
and
$$S_{j,w,\la}f =\la\eta\int_{I_j} e^{-it\la^2} \alpha_j(t) \Hat \rho(\la\eta t)  [\Delta ,\psi_{tr}] e^{-it\Delta }f \, dt.$$

Let
$ \psi_1 \in C^\infty_0 (M)$ with $\psi_1 = 1$ on $M_{tr}$. We have following analog of \eqref{3.8}. For any $q>2,$
\begin{equation}\label{3.15}
\|R_\gamma\psi_1(\Delta +\la^2+i\la/\log\la)^{-1}h\|_{L^q(\gamma)} \lesssim \la^{\mu(q)-1}(\log\la)^{1/2} \|h\|_{L^2(M)}.
\end{equation}
This follows from the Cauchy-Schwarz inequality, $L^2$ orthogonality, and the sharp spectral projection estimates, Proposition \ref{proposition3.4}.  

By \eqref{3.15}, as well as the arguments on page 20 and 21 of \cite{hstz}, we have
\begin{multline}\label{3.13}
\Bigl(\, \sum_j \|R_\gamma\psi_1(\Delta +\la^2+i\la/\log\la)^{-1} R'_{j,v,\la}f \|^2_{L^q(\gamma)} \, \Bigr)^{1/2}
\\+
\Bigl(\, \sum_j \|R_\gamma\psi_1(\Delta +\la^2+i\la/\log\la)^{-1} S_{j,v,\la}f \|^2_{L^q(\gamma)} \, \Bigr)^{1/2}
\lesssim \la^{\mu(q)}\eta(\log\la)^{1/2} \, \, \|f\|_{L^2(M)},
\end{multline}
and
\begin{align}\label{3.14}
\left( \sum_j \|R_\gamma\psi_1(\Delta +\la^2+i\la/\log\la)^{-1} R'_{j,w,\la}f \|^2_{L^q(\gamma)}  \right)^{1/2}
\lesssim \la^{\mu(q)}\eta(\log\la)^{1/2} \|f\|_{L^2(M)}.
\end{align}

Now, it suffices to estimate \begin{align}
\Bigl(\, \sum_j \|R_\gamma\psi_1(\Delta +\la^2+i\la/\log\la)^{-1} S_{j,w,\la}f \|^2_{L^q(\gamma)}  \Bigr)^{1/2}.
\end{align}
We need the following proposition.
 
\begin{proposition}\label{psi1}
    Let $M$ be an even asymptotically hyperbolic surface with curvature pinched below 0, $\gamma$ be a geodesic in $M.$ Let
$ \psi_1 \in C^\infty_0 (M)$ with $\psi_1 = 1$ on $M_{tr}$, and $\Tilde{\psi_1} \in C^\infty_0 (M_\infty)$ supported away
 from the trapped set, then, for $2 < q < \infty$
 \begin{align}
     ||R_{\gamma}\psi_1(\Delta  +\lambda^2 +i(\log\lambda)^{-1}\lambda)^{-1}(\Tilde{\psi_1}h)||_{L^q(\gamma)} \lesssim \lambda^{\mu(q)-1} ||h||_{L^2(M)}.
 \end{align}
\end{proposition}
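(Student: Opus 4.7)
The approach will closely parallel the proof of Proposition \ref{keya}, with the non-trapping hypothesis on $\text{supp}(\tilde{\psi_1})$ playing the geometric role that the simple-connectedness of $\tilde{M}$ played there. I would first represent the resolvent as the time integral
\begin{equation*}
(\Delta + \lambda^2 + i\lambda/\log\lambda)^{-1} = \frac{1}{i(\lambda + i/\log\lambda)} \int_0^\infty e^{it\lambda - t/\log\lambda}\cos(tP)\,dt,
\end{equation*}
insert the spectral cutoff $\mathbf{1}_{[\lambda/2,2\lambda]}(P)$ whose complement is controlled by $L^2$ orthogonality together with Proposition \ref{proposition3.4} as in \eqref{betacut}, and perform a dyadic decomposition $\sum_j \beta(2^{-j}t)$ producing operators
\begin{equation*}
T_j h = \frac{1}{i(\lambda + i/\log\lambda)}\int_0^\infty \beta(2^{-j}t) e^{it\lambda - t/\log\lambda}\cos(tP) h\,dt,
\end{equation*}
then estimate the contribution of each $T_j$ separately and sum.

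The ranges $2^j \leq 1/\lambda$, $1/\lambda \leq 2^j \leq C_0$, and $2^j \geq c_0 \log\lambda$ can be treated exactly as in cases (i)--(iii) of the proof of Proposition \ref{keya}, with the role of the sharp spectral projection on $\tilde{M}$ now played by Proposition \ref{proposition3.4} applied to the compact set $\gamma \cap \text{supp}(\psi_1)$. A notable simplification relative to Proposition \ref{keya} is that, since $\text{supp}(\psi_1)$ is compact, $\gamma \cap \text{supp}(\psi_1)$ can be covered by $O(1)$ patches of fixed radius, avoiding the $\lambda^{Cc_0}$ loss that had to be absorbed in \eqref{n12a}.

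The central case is the intermediate range $C_0 \leq 2^j \leq c_0 \log\lambda$. I would fix a finite partition of unity $\{\psi_k\}$ of a neighborhood of $\gamma \cap \text{supp}(\psi_1)$ and decompose the pseudodifferential piece $\psi_k \beta(P/\lambda)\tilde{\psi}_k$ into microlocal cutoffs $A_{k,0} + A_{k,1}$ as in \eqref{n7}, where $A_{k,0}$ localizes to the covector direction $\omega_k$ of the bicharacteristic from $x_k$ aimed at $\text{supp}(\tilde{\psi_1})$. The $A_{k,1}$ contribution is controlled by integration by parts in $t$ just as in \eqref{n15}, exploiting that the group velocity in its conic support carries energy away from $\text{supp}(\tilde{\psi_1})$. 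The $A_{k,0}$ contribution is where the non-trapping hypothesis enters: since $\text{supp}(\tilde{\psi_1})$ is bounded away from the trapped set and $c_0$ is chosen small, the bicharacteristic starting at $(x_k, \omega_k)$ reaches $M_\infty$ quickly and remains there throughout $[0, c_0 \log\lambda]$. On the corresponding microlocal region $\Delta = \tilde{\Delta}$, so a Duhamel comparison replaces $\cos(tP)\tilde{\psi_1}$ by $\cos(t\tilde{P})\tilde{\psi_1}$ modulo a commutator error from $[\Delta, \psi_\infty]$, reducing the problem to the $\tilde{M}$-side analog already treated by the estimates \eqref{n17}--\eqref{n18} inside the proof of Proposition \ref{keya}.

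The main obstacle is the quantitative Duhamel/Egorov comparison: tracking the microlocal support of the commutator error $[\Delta, \psi_\infty]\cos(sP)\tilde{\psi_1}h$ along the bicharacteristic flow so that it remains disjoint from the conic support of $A_{k,0}$ throughout the entire logarithmic time window $[0, c_0 \log\lambda]$. This is ultimately a consequence of Egorov's theorem together with the non-trapping of $\text{supp}(\tilde{\psi_1})$, but care is needed because the Hadamard parametrix error grows exponentially in time, so $c_0$ must be chosen sufficiently small in terms of the curvature bounds of $M$ to ensure these errors remain $O(\lambda^{-N})$.
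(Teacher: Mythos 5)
Your overall skeleton (resolvent as a time integral, spectral cutoff via the analog of \eqref{betacut}, dyadic decomposition in $t$, small and very large times handled as in Proposition \ref{keya} with Proposition \ref{proposition3.4} replacing the $\tilde M$ estimates) matches the paper, but your treatment of the crucial intermediate range $C_0\le 2^j\le c_0\log\lambda$ diverges from the paper's proof and contains a genuine gap. You microlocalize at the observation points $x_k\in\gamma\cap\mathrm{supp}(\psi_1)$ and then propose a Duhamel comparison replacing $\cos(tP)\tilde\psi_1$ by $\cos(t\tilde P)\tilde\psi_1$, reducing to \eqref{n17}--\eqref{n18}. The geometric claim underpinning this -- that the bicharacteristic through $(x_k,\omega_k)$ ``reaches $M_\infty$ quickly and remains there throughout $[0,c_0\log\lambda]$'' -- is not justified: the points $x_k$ lie in $\mathrm{supp}(\psi_1)$, which contains a neighborhood of the trapped set, and since $M$ is not simply connected there may be many geodesics joining $\mathrm{supp}(\tilde\psi_1)$ to $x_k$ arriving in essentially the same direction, some of which re-enter $M_{tr}$ and linger near the trapped set. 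Consequently the commutator error $[\Delta,\psi_\infty]$ in your Duhamel formula is supported in the compact transition region, and controlling its repeated contributions over a time window of length $c_0\log\lambda$ is essentially the original problem; you flag this yourself as ``the main obstacle'' but offer only an appeal to Egorov's theorem, which does not by itself give the uniform $O(\lambda^{-N})$ control you need over log-long times. A second omission: nothing in your argument distinguishes forward-escaping from backward-escaping covectors at the source, yet the outgoing resolvent $\int_0^\infty e^{it\lambda-t/\log\lambda}\cos(tP)\,dt$ only ``sees'' positive times, so some mechanism is needed to handle the covectors over $\mathrm{supp}(\tilde\psi_1)$ whose forward flow does not leave $\mathrm{supp}(\psi_1)$.

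The paper's proof avoids both issues by microlocalizing at the source rather than at $\gamma$: Lemma 2.9 of \cite{hstz} writes $\beta(P/\lambda)\tilde\psi_1=\sum_r B^+_r+\sum_r B^-_r+R$, where the forward (resp.\ backward) flow out of the microsupport of $B^+_r$ (resp.\ $B^-_r$) leaves $\mathrm{supp}(\psi_1)$ after a fixed time $C$. The $B^-$ terms are converted to the $B^+$ setting by estimating the difference of the incoming and outgoing resolvents, which is an honest spectral-projection bound proved with Proposition \ref{proposition3.4} and the local smoothing bound \eqref{lemma5}; this is the step your proposal has no substitute for. Then, for $10C\le 2^j\le c_0\log\lambda$, the paper does not compare with $\tilde M$ at all: by duality and propagation of singularities ((2.120) of \cite{hstz}), the kernel of $B^+_r\circ\cos(tP)\circ\psi_1$ is $O(\lambda^{-N})$ on this time range, so the intermediate contribution is negligible rather than being reduced to \eqref{n17}--\eqref{n18}. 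To repair your argument you would either need to prove the log-time microlocal escape/Duhamel claim quantitatively (including the multiplicity of connecting geodesics and the sign-of-time issue), or adopt the paper's source-side decomposition.
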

Before starting the proof, we quote Lemma 2.9 from \cite{hstz}.
\begin{lemma}
    There exist finitely many pseudo differential operators $B^\pm_r$ such that 
\begin{equation}
\beta(P/\la)\tilde\psi_1=\sum_{r=1}^{N_+}B^+_r+\sum_{r=1}^{N_-}B^-_r +R,
\end{equation} 
with
$ \|R\|_{L^2(M)\to L^2(M)}=O(\la^{-1}).
$
In addition, for all $(x,y,\xi)\in \text{\normalfont supp} (B^+_r(x, y,\xi))$, if $(x(t),\xi(t))=e^{tH_p}(x,\xi)$, we have 
\begin{equation}\label{6a}
    d_g(x(t),\text{\normalfont supp}(\psi_1 ))\geq 1\,\,\,\text{for}\,\,\,t \ge C,
\end{equation}
for some large enough constant C. Similarly, for all $(x, y,\xi)\in \text{\normalfont supp} (B^-_j(x,y,\xi))$,  we have 
\begin{equation}\label{7a}
    d_g(x(t),\text{\normalfont supp}(\psi_1 ))\geq 1\,\,\,\text{for}\,\,\,t\le -C.
\end{equation}
\end{lemma}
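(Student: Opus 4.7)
The plan is to exploit the fact that $\tilde\psi_1$ is compactly supported in $M_\infty$, so that its support projects away from the trapped set $\pi(\Gamma_+\cap\Gamma_-)$. Every $(x,\xi)\in S^*M$ with $x\in\mathrm{supp}(\tilde\psi_1)$ and $p(x,\xi)\in\mathrm{supp}(\beta)$ lies outside $\Gamma_+\cap\Gamma_-$, hence its forward or backward geodesic issues to infinity and thereby exits the compact set $\mathrm{supp}(\psi_1)$ for all sufficiently late (or early) times. A microlocal partition of unity on the cosphere bundle, adapted to this dichotomy, will split $\beta(P/\la)\tilde\psi_1$ into the two required sums $\sum_r B_r^\pm$, with the symbolic mismatch absorbed into the $O(\la^{-1})$ remainder $R$ by standard pseudodifferential calculus.

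First, I would realize $\beta(P/\la)\tilde\psi_1$ as a pseudodifferential operator modulo a rapidly $\la$-decaying tail, exactly as in the construction of the operator $B$ in \eqref{n5'}. Writing
\begin{equation*}
\beta(P/\la)=(2\pi)^{-1}\int\rho(t)\la\hat\beta(\la t)\cos(tP)\,dt+\widetilde R_0,
\end{equation*}
with $\rho\in C_0^\infty(\R)$ an even cutoff equal to $1$ near $0$, finite propagation speed localizes the kernel of the first term to a small geodesic neighborhood of the diagonal, within which the Hadamard parametrix for $\cos(tP)$, available on the bounded-geometry manifold $M$, represents $\beta(P/\la)\tilde\psi_1$ as a zeroth-order pseudodifferential operator with principal symbol $\beta(p(x,\xi))\tilde\psi_1(x)$; the piece arising from $\widetilde R_0\tilde\psi_1$ is rapidly decaying in $\la$ on $L^2(M)$.

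Next I would carry out the dynamical decomposition. Let $K=\{(x,\xi)\in S^*M:x\in\mathrm{supp}(\tilde\psi_1),\,p(x,\xi)\in\mathrm{supp}(\beta)\}$, which is compact in $S^*M$. For each $(x_0,\xi_0)\in K$, select $\varepsilon(x_0,\xi_0)\in\{+,-\}$ so that the flow from $(x_0,\xi_0)$ escapes to infinity as $\varepsilon t\to+\infty$; continuous dependence of geodesics on initial data, together with the geometry of the negatively curved funnel ends, yields a conic open neighborhood $V_{(x_0,\xi_0)}^\varepsilon$ and a constant $C_{(x_0,\xi_0)}$ so that $d_g(x(t),\mathrm{supp}(\psi_1))\geq 1$ for every $(x,\xi)\in V_{(x_0,\xi_0)}^\varepsilon$ and every $t$ with $\varepsilon t\geq C_{(x_0,\xi_0)}$. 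Extracting a finite subcover of $K$ by such $V_r^+$ and $V_r^-$ absorbs all the $C_{(x_0,\xi_0)}$ into a single $C$; a smooth partition of unity $\{a_r^\pm(x,\xi)\}$ subordinate to this cover, taken homogeneous of degree zero in $\xi$, then lets me define
\begin{equation*}
B_r^\pm:=\mathrm{Op}\bigl(a_r^\pm(x,\xi)\,\beta(p(x,\xi))\,\tilde\psi_1(x)\bigr),
\end{equation*}
quantized through the same parametrix used to realize $\beta(P/\la)\tilde\psi_1$; each symbol inherits the required flow-escape support property from $a_r^\pm$, which gives \eqref{6a} and \eqref{7a}.

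The principal symbol of $\beta(P/\la)\tilde\psi_1-\sum_r B_r^+-\sum_r B_r^-$ vanishes identically since $\sum_r a_r^++\sum_r a_r^-\equiv 1$ on $K$, so the difference $R$ is a pseudodifferential operator of order $-1$ with symbol uniformly bounded in the bounded-geometry symbol class on $M$; standard $L^2$-boundedness then gives $\|R\|_{L^2(M)\to L^2(M)}=O(\la^{-1})$. The main obstacle is the dynamical input: verifying that forward (resp.\ backward) escape from $\mathrm{supp}(\psi_1)$ is genuinely an open condition on $S^*M$ with a uniform escape time $C$. This is precisely where the pinched-negative-curvature hypothesis on the funnel ends is indispensable, ruling out marginal trajectories that briefly leave $\mathrm{supp}(\psi_1)$ and then return; once this openness and uniformity are secured, the remainder of the argument is routine bounded-geometry symbolic calculus.
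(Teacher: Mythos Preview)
The paper does not prove this lemma; it is quoted verbatim as Lemma~2.9 of \cite{hstz}. Your sketch is a correct outline of the standard argument and matches what that reference does: realize $\beta(P/\la)\tilde\psi_1$ as a semiclassical pseudodifferential operator via the Hadamard parametrix, use compactness of $\{(x,\xi)\in S^*M:x\in\mathrm{supp}(\tilde\psi_1)\}$ together with the fact that every such point lies outside $\Gamma_+\cap\Gamma_-$ to extract a finite microlocal partition of unity into forward- and backward-escaping pieces, and absorb the subprincipal mismatch into an $O(\la^{-1})$ remainder. Your identification of the only nontrivial step---openness of escape with a uniform time $C$---is accurate; on an asymptotically hyperbolic end this follows from convexity of the funnel (once a trajectory is far enough in the end with outward momentum, the boundary-defining function is monotone along it), so nearby initial data escape as well.
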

\begin{proof}[Proof of Proposition \ref{psi1}]
By a similar argument as \eqref{betacut}, it suffices to estimate\begin{align}
    ||R_{\gamma}\psi_1(\Delta  +\lambda^2 +i(\log\lambda)^{-1}\lambda)^{-1}\beta(P/\lambda)(\Tilde{\psi_1}h)||_{L^q(\gamma)} \lesssim \lambda^{\mu(q)-1} ||h||_{L^2({M})}.
\end{align}
We may deal with the remainder term, $R,$ by the spectral projection theorem. \begin{align}
    ||R_{\gamma}\psi_1(\Delta  +\lambda^2 +i(\log\lambda)^{-1}\lambda)^{-1}\beta(P/\lambda)R(\Tilde{\psi_1}h)||_{L^q(\gamma)} \lesssim \lambda^{\mu(q)-1} ||h||_{L^2({M})}
\end{align}
Meanwhile, notice that the $B^\pm_r$ operators satisfy 
\begin{equation}\label{bbound}
    \|R_\gamma B^\pm_r\|_{L^p(M)\to L^q(\gamma)}\lesssim\|R_\gamma\|_{L^p(M)\to L^q(\gamma)},\,\,\,\forall\,\,\,1\le p,q\le \infty.
\end{equation}
We first claim that we may assume $B=B_r^+$ without loss of generality by checking

\begin{align*}
&||R_\gamma\psi_1 \beta(P/\la)\left((\Delta +\lambda^2+i\lambda/\log\lambda)^{-1}-(\Delta +\lambda^2-i\lambda/\log\lambda)^{-1}\right)\Tilde{\psi_1}h||_{L^q(\gamma)}\\
    &\lesssim\left\|R_\gamma\psi_1 \sum_{|j|\lesssim\lambda\log\lambda}\mathbf{1}_{[\lambda+\frac{j}{\log \lambda},\lambda+\frac{j+1}{\log \lambda}]}(P)\left(\left(\Delta +\lambda^2+\frac{i\lambda}{\log\la}\right)^{-1}-\left(\Delta +\lambda^2-\frac{i\lambda}{\log\la}\right)^{-1}\right)\Tilde{\psi_1}h\right\|_{L^q(\gamma)}\\
    &\lesssim\left\|R_\gamma\psi_1 \sum_{|j|\lesssim\lambda\log\lambda}\left(\frac{\log\lambda}{(1+j^2)\lambda}\right)\mathbf{1}_{[\lambda+\frac{j}{\log \lambda},\lambda+\frac{j+1}{\log \lambda}]}(P)\Tilde{\psi_1}h\right\|_{L^q(\gamma)}\\
    &\lesssim\lambda^{\mu(q)}\log\lambda^{-1/2}\left\|\sum_{|j|<\lambda\log\lambda/2}\left(\frac{\log\lambda}{(1+j^2)\lambda}\right)\mathbf{1}_{[\lambda+\frac{j}{\log \lambda},\lambda+\frac{j+1}{\log \lambda}]}(P)\Tilde{\psi_1}h\right\|_{L^2({M})}\\
    &\lesssim\lambda^{\mu(q)}\log\lambda^{-1/2}\sum_{|j|<\lambda\log\lambda/2}\left(\frac{\log\lambda}{(1+j^2)\lambda}\right)(\log\lambda)^{-1/2}||h||_{L^2({M})}\\
    &\lesssim\lambda^{\mu(q)-1}||h||_{L^2({M})}.
\end{align*}

Now, it suffice to assume $B=B_r^+$ and
estimate\begin{align}
    ||R_{\gamma}\psi_1(\Delta  +\lambda^2 +i(\log\lambda)^{-1}\lambda)^{-1}\beta(P/\lambda)(Bh)||_{L^q(\gamma)} \lesssim \lambda^{\mu(q)-1} ||h||_{L^2({M})}.
\end{align}

We split the proof into three cases.

    \noindent(i) $2^j\le 10C$ for $C$ as in \eqref{7a}.

We repeat the arguments in cases (i) and (ii) in the proof of Proposition~\ref{keya} to handle this case.

\noindent(ii) $2^j\ge c_0\log\la$ for some small enough $c_0$.

Define $$ T_m=\frac1{i(\la+i/\log\la)}\mathbf{1}_{\left[\la+\frac{m}{\log\la},\la+\frac{m+1}{\log\la}\right]}( P) \int_0^\infty e^{it\la-t/\log\la}\cos (tP)\,\sum_{2^j\ge c_0\log\la}\beta(2^{-j}t)\,dt
$$

By integration by parts in the $t$-variable, the symbol of $T_m$
is $O\left(\la^{-1}\log\la (1+|m|)^{-N}\right )$. Meanwhile, by Lemma 2.5 of \cite{hstz}, we have \begin{align}\label{lemma5}
    ||\mathbf{1}_{[\lambda,\lambda+\log\lambda^{-1})}(P)Bh||_{L^2({M})}\lesssim\log\lambda^{-1/2}||h||_{L^2({M})}.
\end{align} By Proposition \ref{proposition3.4} and \eqref{lemma5},
\begin{align*}
\|&R_\gamma  \psi_1\sum_{|m|\lesssim \la\log\la} T_m\circ B h\|_{L^q( \gamma)}
\\
&\le \sum_{|m|\lesssim \la\log\la}
\|R_\gamma \psi_1 T_m\circ B h\|_{L^q(\gamma)}
\\
&\le  \la^{\mu(q)}(\log\la)^{-1/2}  \sum_{|m|\lesssim \la\log\la}
\|  T_m\circ B h\|_{L^2({M})}
\\
&\lesssim \la^{\mu(q)}(\log\la)^{-1/2}\sum_{|m|\lesssim \la\log\la} (1+|m|)^{-N} \la^{-1}\log\la  \|\mathbf{1}_{[\la+\frac{m}{\log\la},\la+\frac{m+1}{\log\la}]}( P) \circ B h\|_{L^2({M})}
\\
&\lesssim \la^{\mu(q)-1}  \|h\|_{L^2({M})}.
\end{align*}

\noindent(iii) $10C \le 2^j\le c_0\log\la$ for $C$ as in \eqref{7a} and $c_0$ as in (ii).

By duality, it suffices to show that the operator
\begin{equation}\label{20}
  W_j= \frac1{i(\la+i(\log\la)^{-1})}\mathbf{1}_{\left[\la/2,2\la\right]}( P) \int_0^\infty \beta(2^{-j}t)e^{-it\la-t/\log\la}   B\circ \cos (tP)\circ\psi_1\,dt
\end{equation}
satisfies $||W_j(R_\gamma)^*||_{L^{q'}(\gamma))\to L^2({M})}\lesssim\la^{\mu(q)-1}$.

By (2.120) in \cite{hstz}, the kernel of $W_j$, which we denote by $K_j(x,y)$ with $x,y\in {M}$ satisfies \begin{align}
    |K_j(x,y)|=O(\lambda^{-N})
\end{align}
for every $x,y\in {M}$ and $N\in\mathbb{N}$ if we choose $c_0$ small enough.

Thus, for $f\in L^{q'}(\gamma),$ we have \begin{align}
    |W_j(f)(x)|\lesssim\lambda^{-N}||f||_{L^1(\gamma\cap \text{supp}(\psi_1))}\lesssim\lambda^{-N}||f||_{L^{q'}(\gamma\cap \text{supp}(\psi_1))}.
\end{align}
Due to the compact cutoff $B$,
\begin{align}
    ||W_jf||_{L^{2}({M})}\lesssim\lambda^{-N}||f||_{L^{q'}(\gamma)}.
\end{align}
\end{proof}
Fix $\psi_1\in C_0^\infty ({M})$ such that $\psi_1\equiv 1$ in the support of $\psi_{tr}$. We may use the above proposition and (2.47) of \cite{hstz} to get \begin{align}
    \bigl(\sum_j \|R_\gamma\psi_1& (\Delta +\la^2+i\la/\log\la)^{-1} S_{j,w,\la}f \|^2_{L^q(\gamma)}\bigr)^{1/2} \lesssim \la^{\mu(q)} \eta(\log\la)^{1/2}\|f\|_{L^2({M})}.
\end{align}

\section{Sharpness}
We present two examples as in \cite{Anker}, which prove the sharpness of Theorem \ref{nb} for $q< 4$ and $q\geq 4$ respectively on the hyperbolic plane, $\mathbb{H}$. We let $P=P_\mathbb{H}=\sqrt{\Delta_{\mathbb{H}}-\frac{1}{4}}$. 
\begin{example}[Knapp example]
\normalfont Define $f$ on $\mathbb{H}$ by its Fourier transform $\Tilde{f}$, such that $$\Tilde{f}(\kappa,\xi)=\mathbf{1}_{[\lambda-\eta,\lambda+\eta]}(\kappa)\mathbf{1}_{[-1,1]}(\xi).$$
By the Plancherel formula, 
\begin{align}\label{l2example2}
    ||f||_{L^2(\mathbb{H})}\sim||\kappa\Tilde{f}||_{L^2_{\kappa,\xi}}\sim\lambda\eta^{1/2}.
\end{align}
Consider the upper half-plane model and let $x\in\mathbb{C}^+$ be $x=x_1+x_2i$. By the inverse Fourier transform, if $\Gamma(z)=\int_0^\infty t^{z-1}e^{-t}dt$ is the standard Gamma function, we have $$f(x)=\frac{2}{\pi}\int^{\lambda+\eta}_{\lambda-\eta}\int^1_{-1}\frac{1}{2\sqrt{\pi}}\frac{\Gamma(\frac{1}{2}+i\kappa)}{\Gamma(1+i\kappa)}\left(\frac{x_2}{(x_1-\xi)^2+x_2^2}\right)^{1/2}e^{i\kappa\log\left(\frac{x_2}{(x_1-\xi)^2+x_2^2}\right)}d\xi\kappa^2d\kappa.$$

Following Section 3.3 of \cite{Anker}, for $x_1\sim 1,$ $x_1<\frac{x_2^2}{\lambda},$ $$\frac{\Gamma(\frac{1}{2}+i\kappa)}{\Gamma(1+i\kappa)}\left(\frac{x_2}{(x_1-\xi)^2+x_2^2}\right)^{1/2}e^{i\kappa\log\left(\frac{x_2}{(x_1-\xi)^2+x_2^2}\right)}\sim(\kappa x_2)^{-1/2}.$$
Therefore,
$$|f(x)|\sim \int^{\lambda+\eta}_{\lambda-\eta}\int^{1}_{-1}(\kappa x_2)^{-1/2}d\xi\kappa^2d\kappa \sim\lambda^{3/2}\eta x_2^{-1/2}.$$
Let $\gamma$ be a vertical geodesic $\gamma(t)=x_1+it$ with $x_1\sim 1.$
\begin{align}\label{lpexample2}
    ||f||_{L^q(\gamma)}\gtrsim \lim_{\epsilon\to0}\lambda^{3/2}\eta \left(\int_{\sqrt{\lambda}+\epsilon}^\lambda t^{-\frac{q}{2}}\frac{dt}{t}\right)^{1/q}\sim \eta\lambda^{5/4}.
\end{align}
Combining \eqref{l2example2} and \eqref{lpexample2}, we have $$||R_\gamma\mathbf{1}_{[\lambda,\lambda+\eta]}(P)||_{L^2(\mathbb{H})\to L^q(\gamma)}\gtrsim \eta^{1/2}\lambda^{1/4}.$$
\end{example}
\begin{example}[Spherical example]

\normalfont Define a radial function $f$ on $\mathbb{H}$ by its Fourier transform $$\Tilde{f}(\kappa)=\mathbf{1}_{[\lambda-\eta/2,\lambda+\eta/2]}(\kappa)+\mathbf{1}_{[-\lambda-\eta/2,-\lambda+\eta/2]}(\kappa).$$
By the Plancherel formula, for $\lambda>1,$
\begin{align}\label{l2examp}
    ||f||_{L^2(\mathbb{H})}\sim\int^\infty_0\left|\frac{\Gamma(\frac{1}{2}+i\kappa)}{\Gamma(i\kappa)}\tilde f(\kappa)\right|^2\sim\sqrt{\lambda\eta}.
\end{align}
The spherical function $\varphi_\kappa$ is defined as \begin{align}
    \varphi_\kappa=\frac{\sqrt{2}}{\pi}\int^r_0\cos(\kappa s)(\cosh r-\cosh s)^{-1/2}ds.
\end{align}
For $\kappa\in[\lambda,\lambda+\eta]$ with $\lambda>1$ and $r<\frac{1}{\lambda+\eta/2},$ \begin{align}\label{estvarphi}
    \varphi_\kappa(r)\sim \varphi_0(r)\sim e^{-r/2}.
\end{align}
By the spherical Fourier inversion formula,
$$f(r)\sim\int^\infty_0\tilde f(\kappa)\varphi_\kappa(r) \left|\frac{\Gamma(\frac{1}{2}+i\kappa)}{\Gamma(i\kappa)}\right|^2d\kappa\sim \lambda\eta.$$
Hence 
\begin{align}\label{lpexamp}
    ||f||_{L^q(\gamma)}\gtrsim \left(\int_{|r|<\frac{1}{\lambda+\eta/2}} (\lambda\eta)^q dr\right)^{1/q}\gtrsim \lambda^{1-1/q}\eta.
\end{align}
Combining \eqref{l2examp} and \eqref{lpexamp}, $$||R_\gamma\mathbf{1}_{[\lambda,\lambda+\eta]}(P)||_{L^2(\mathbb{H})\to L^q(\gamma)}\gtrsim \eta^{1/2}\lambda^{1/2-1/q}.$$
\end{example}

Now we present an example to illustrate why we require $M$ to be a surface with bounded geometry.
\begin{example}[{Hyperbolic surface with cusp}]

\normalfont Consider the upper half-plane model. Let $x=x_1+ix_2\in\mathbb{C}^+$  for $x_1,x_2\in\R.$ A parabolic cylinder $X=\mathbb{H}/\langle h_\alpha\rangle$, for $\alpha\in\R$ and $h_\alpha(x):=x+\alpha.$ Notice that the injectivity radius of a parabolic cylinder is not a positive number. Thus, a parabolic cylinder is not a surface of bounded geometry, and Theorem \ref{theorem1} does not apply to it.

Recall that $\Delta_{{\mathbb H}}-\frac{1}{4}=-x_2^2( \partial_1^2+\partial_2^2)-x_2\partial_2-\frac{1}{4}$, so $g(x)=x_2^{{1}/2-i\xi}$ is a generalized eigenfunction of $P_\mathbb{H}$ of the eigenvalue
$\xi.$ Note that $\varsigma_\la$ is independent of $x_1$. Thus, $g$ is also a generalized eigenfunction of $P_X=\sqrt{\Delta_X-\frac{1}{4}}$ of the same eigenvalue.

Consider
$$\varsigma_\la(x)=\frac1{\sqrt{2\pi}} \int_{-\infty}^\infty \phi(\eta^{-1}(\la-\xi))) \, x_2^{1/2-i\xi} \, d\xi,$$
where $\phi$ is supported in $[-1/10,1/10]$. Then, the $P_X$ spectrum of $\varsigma_\la$ is in $[\la-\eta,\la+\eta]$
if $\la$ is large and $\eta\in (0,1]$.  Furthermore,
$$\varsigma_\la(x)=\eta\,  x_2^{1/2-i\la}\Hat \phi(\eta \log x_2).$$
Using the change of coordinates $\omega =\log x_2$ we see that 
$$\|\varsigma_\la\|_{L^2(X)}=\eta \Bigl(\int_0^\infty x_2 \, |\Hat \phi(\eta \log x_2)|^2 \frac{dx_2}{x_2^2}\, \Bigr)^{1/2}
=\eta \Bigl(\int_{-\infty}^\infty |\Hat\phi (\eta \omega)|^2 \, d\omega \Bigr)^{1/2}.$$
Without loss of generality, let $X=\{x_1+ix_2|x_1\in[-\alpha/2,\alpha/2),x_2>0\}$. Notice that the vertical line $\gamma(t)=it$ is a geodesic in $X.$ We compute the $L^q$ norm of $\varsigma_\la$ restricted to $\gamma.$
\begin{align*}\|R_\gamma\varsigma_\la\|_{L^q(\gamma)}&=\eta \Bigl(\, \int_0^\infty x_2^{\frac{q}2} \, |\Hat \phi(\eta \log x_2)|^q \, \frac{dx_2}{x_2} \, \Bigr)^{1/q}
\\
&=
\eta\Bigl( \int _{-\infty}^\infty e^{\frac{q}2\omega} \, |\Hat \phi(\eta \omega)|^q \, d\omega\Bigr)^{1/q}.
\end{align*}
If we take $\phi(s)=a(s) \cdot \mathbf{1}_{[0,1]}(s)$ where $a\in C^\infty_0((-1/10,1/10))$ satisfies $a(0)=1$, then
$|\Hat \phi(\tau)|\approx |\tau|^{-1}$ for large $|\tau|$.  In this case, $\varsigma_\la\in L^2(X)$ but
$R_\gamma\varsigma_\la\notin L^q(\gamma)$ for any $q\in (2,\infty]$.  Thus,
$R_\gamma\mathbf{1}_{[\la,\la+\eta]}(P_X)$ are unbounded between $L^2(X)$ and $L^q(\gamma)$.
\end{example}

\printbibliography
\end{document}